\documentclass[15pt]{amsart}
\usepackage{amsmath}
\usepackage{mathtools}
\usepackage{}
\usepackage{graphicx}
\usepackage[colorlinks=true, allcolors=blue]{hyperref}
\usepackage{amsfonts}
\usepackage{amsthm}
\usepackage{newlfont}
\usepackage{amscd}
\usepackage{amsgen}
\usepackage{amssymb}
\usepackage{mathrsfs}	
\usepackage{longtable}
\usepackage{listings}
\usepackage{extarrows}
\usepackage{tikz}
\usepackage{tikz-cd}
\usepackage{verbatim}
\numberwithin{equation}{section}
\usepackage[all]{xy}
\usepackage{color}
\usepackage{amssymb}
\usepackage[left=3cm,right=3cm]{geometry}
\usepackage{tikz-cd}
\usepackage{mathtools}
\usepackage{dynkin-diagrams}
\usetikzlibrary{matrix,shapes,arrows,decorations.pathmorphing}
\usepackage{calligra}
\usepackage{mathrsfs}
\usepackage{enumitem} 
\usepackage{tgpagella} 
\usepackage[parfill]{parskip} 
\usepackage{float}
\restylefloat{table}
\usepackage{ytableau}


\let\blb\mathbb

\def\CC{{\blb C}}

\def\II{{\blb I}}

\def\LL{{\blb L}}

\def\PP{{\blb P}}
\def\QQ{{\blb Q}}

\def\ZZ{{\blb Z}}

\def\RR{{\blb R}}

\let\cal\mathcal

\def\Bc{{\cal B}}

\def\Ec{{\cal E}}
\def\Fc{{\cal F}}
\def\Gc{{\cal G}}
\def\Hc{{\cal H}}
\def\Ic{{\cal I}}

\def\Lc{{\cal L}}

\def\Oc{{\cal O}}

\def\Qc{{\cal Q}}

\def\Sc{{\cal S}}
\def\Tc{{\cal T}}
\def\Uc{{\cal U}}

\def\Wc{{\cal W}}
\def\Xc{{\cal X}}

\DeclareMathOperator{\Gr}{Gr}
\DeclareMathOperator{\Fl}{Fl}
\newcommand{\mQ}{\mathcal{Q}}
\newcommand{\mU}{\mathcal{U}}

\DeclareMathOperator{\Sym}{Sym}
\newcommand{\of}{\mathcal{O}}
\newcommand{\W}{\bigwedge}

\newcommand{\mF}{\mathcal{F}}
\newcommand{\vgit}{\operatorname{\mathbf v}}

\newtheorem{lemma}{Lemma}[section]
\newtheorem{proposition}[lemma]{Proposition}
\newtheorem{theorem}[lemma]{Theorem}
\newtheorem{corollary}[lemma]{Corollary}

\newtheorem{example}[lemma]{Example}
\newtheorem{definition}[lemma]{Definition}
\newtheorem{conjecture}[lemma]{Conjecture}

\setcounter{MaxMatrixCols}{200}  

\theoremstyle{remark}

\newtheorem{remark}[lemma]{Remark}

\def\dbcoh{D^b}
\def\wt{\widetilde}
\def\wh{\widehat}
\def\arw{\longrightarrow}
\def\Hom{\operatorname{Hom}}

\def\Ext{\operatorname{Ext}}
\def\rk{\operatorname{rk}}

\def\git{/\hspace{-3pt}/}

\DeclareMathOperator{\sHom}{\mathscr{H}\text{\kern -3pt {\calligra\large om}}\,}

\newcommand\quotient[2]{
        \mathchoice
            {
                \text{\raise1ex\hbox{$#1$}\Big/\lower1ex\hbox{$#2$}}%
            }
            {
                #1\,/\,#2
            }
            {
                #1\,/\,#2
            }
            {
                #1\,/\,#2
            }
    }

\makeatletter
\def\namedlabel#1#2{\begingroup
    #2%
    \def\@currentlabel{#2}%
    \phantomsection\label{#1}\endgroup
}
\makeatother


\title{The generalized roof $F(1,2,n)$: Hodge structures and derived categories}

\author{Enrico Fatighenti}
\address{
Sapienza - Universit\`a di Roma\\ Dipartimento di Matematica "Guido Castelnuovo"\\ Piazzale Aldo Moro 5\\ 00185 Roma.}
\email[E.~Fatighenti]{fatighenti@mat.uniroma1.it}

\author{Micha\l\ Kapustka}
\address{
Institute of Mathematics of the Polish Academy of Sciences \\
ul. Śniadeckich 8, 00-656 Warsaw.}
\email[M.~ Kapustka]{michal.kapustka@impan.pl}

\author{Giovanni Mongardi}
\address{ Alma Mater Studiorum Università di Bologna\\ Dipartimento di Matematica\\ Piazza di Porta San Donato 5\\ 40126 Bologna.}
\email[G.~Mongardi]{giovanni.mongardi2@unibo.it}

\author{Marco Rampazzo}
\address{
Alma Mater Studiorum Università di Bologna\\ Dipartimento di Matematica \\ Piazza di Porta San Donato 5\\ 40126 Bologna.}
\email[M.~ Rampazzo]{marco.rampazzo3@unibo.it}

\begin{document}

\maketitle

\begin{abstract}
    We consider generalized homogeneous roofs, i.e. quotients of simply connected, semisimple Lie groups by a parabolic subgroup, which admit two projective bundle structures. Given a general hyperplane section on such a variety, we study the zero loci of its pushforwards along the projective bundle structures and we discuss their properties at the level of Hodge structures. In the case of the flag variety $F(1,2,n)$ with its projections to $\PP^{n-1}$ and $G(2, n)$, we construct a derived embedding of the relevant zero loci by methods based on the study of $B$-brane categories in the context of a gauged linear sigma model.
\end{abstract}

\subsection*{Keywords}
14J45 Fano varieties; 14J81 relationship with physics; 14F08  derived categories of sheaves, dg categories, and related constructions in algebraic geometry; 14C30 transcendental methods, Hodge theory (algebro-geometric aspects); 14M15 Grassmannians, Schubert varieties, flag manifolds.

\section{Introduction}
The existence of two different projective bundle structures on a Fano variety has recently raised attention: in \cite{occhettaetaltwoproj} a link with the Campana--Peternell conjecture has been highlighted, while in \cite{kanemitsu} the study of a broad class of K-equivalences has been reduced to the classification of \emph{roofs} of projective bundles, i.e. special Fano varieties of Picard rank two which admit two different projective bundle structures of the same rank. In \cite{kr2, roofbundles} this idea has been related to the construction of pairs of Calabi--Yau varieties and their Hodge and derived equivalences. In particular, when the roof is a rational homogeneous variety $G/P$, the construction is greatly simplified: in this class fall, for instance, the derived and Hodge equivalent Calabi--Yau pairs of \cite{mukaiduality, imou, kuznetsovimou, kr, roofbundles}.\\
\\
In this paper, we introduce a generalization of Kanemitsu's notion of roof: a \emph{generalized roof} is a rational homogeneous variety of Picard rank two which admits two projective bundle structures and a line bundle $\Lc$ restriciting to $\Oc(1)$ on both types of fibers. The original definition of roof is achieved by asking for the two projective bundles to have the same rank. We then consider generalized roofs which are homogeneous varieties and call these varieties generalized homogeneous roofs.
Let $X$ be a generalized homogeneous roof with projective bundle structures $h_i:X\arw B_i$ for $i\in\{1;2\}$. Given a general section of $\Lc$, which in the homogeneous case is  $\Oc(1,1):=h_1^*\Oc(1)\otimes h_2^*\Oc(1)$, we construct a pair of varieties $Y_i\subset B_i$ defined as the zero loci of the two pushforwards of that section along $h_1$ and $h_2$. While in the case of a roof these varieties are Calabi--Yau, the vanishing of the first Chern class is in general lost by our generalization. Instead, we provide examples of pairs where $Y_1$ is general type and $Y_2$ is Fano.

\subsubsection*{Hodge-theoretical results: a Fano/general type duality}
One of the main results of this paper is the identification of countably many families of varieties with very different geometrical properties, but the same \emph{Hodge-theoretical core}. In fact, our starting point was to notice a surprising coincidence between the Hodge numbers of the zero loci of (a general global section of) the vector bundles $\mQ^{\vee}(2)$ and $\mU(2)$ on the Grassmannians $ G (k, V)$ and $ G (k+1, V)$. These zero loci are subcanonical, with canonical class opposite in sign and equal in modulus. In particular, for any fixed  {$k$}, they cut either a pair of a (smaller dimensional)  general type variety and a (higher dimensional) Fano variety, or a pair of Calabi-Yau varieties. In the Calabi-Yau case the Hodge numbers are exactly the same. In fact this can be considered a generalization of the $ G (2,5)$ case, already studied in details by \cite{kr}, \cite{bpc}, \cite{or}. Perhaps more surprisingly, a similar phenomenon holds in the general type/Fano scenario, with the (relevant) non-zero Hodge numbers of the latter being equal to the (relevant) Hodge numbers of the former.
In order to explain this phenomenon, we give a slightly modified version of the \emph{jump} procedure in \cite{bfm}, and produce an isomorphism of a (rational) Hodge sub-structure which coincides with the vanishing cohomology in the known cases. This is the content of Theorem \ref{thm:hodge}. In the Calabi-Yau case, it is then natural to conjecture that (as in the $ G (2,5)$ case) such a pair of varieties are derived equivalent, but not isomorphic. In the general type/Fano case, our result can be considered as a Hodge-theoretical approximation of the \emph{Fano visitor} phenomenon, in the sense of, e.g., \cite{kkll}. In fact, although we prove the derived categorical extension of Theorem \ref{thm:hodge} only when $k=1$, it is natural to speculate that the general type variety of the pair will be a \emph{visitor}, and the Fano the corresponding \emph{Host}. In an upcoming work we will go even further in our speculation. We study a more general conjecture formulated in terms of homological projective duality, and we support it with some examples.

\subsubsection*{Generalized roofs and derived categories}
In \cite{kr2} it has been conjectured that a pair of Calabi--Yau varieties arising from a roof should be derived equivalent, and such conjecture is supported by many examples \cite{mukaiduality, kuznetsovimou, kr, kr2}. It is natural to expect that in our generalization a derived embedding should occur. We prove the existence of such embedding for the case of the generalized roof $F(1,2, n)$ for every $n$ (Theorem \ref{thm:derived_embedding_main}). The proof relies on the method of \emph{window categories} (see for example \cite{addingtondonovansegal, segal}), i.e. identifying the derived categories of $Y_1$ and $Y_2$ with some categories of $B$-branes via Kn\"orrer periodicity \cite{shipman}, and constructing the embedding at the level of $B$-branes. A possible, alternative proof of Theorem \ref{thm:derived_embedding_main} could have been formulated in the spirit of Leung--Xie's approach to the related problem of the flip between total spaces \cite{leungxie}, see Remark \ref{rem:chessgame} for the relation with our proof.

\subsubsection*{Interaction with physics} The proof of Theorem \ref{thm:derived_embedding_main} requires the construction of a gauged linear sigma model (GLSM). Such objects have been first introduced by Witten in \cite{witten} to explain the correspondence between two different quantum field theories via phase transition: a nonlinear sigma model with a Calabi--Yau complete intersection as a vacuum manifold, and a Landau--Ginzburg model. This framework proved to be a useful asset to formulate a physical proof of mirror symmetry in the context of toric varieties (see for example \cite{horivafaetal}). Furthermore, the choice of a non abelian gauge group allows the coexistence of multiple geometric phases (i.e. with a smooth Calabi--Yau variety as vacuum manifold) which are expected to be derived equivalent \cite{herbsthoripage}. This provides a method to construct candidate pairs of derived equivalent and non isomorphic Calabi--Yau varieties, which are in general hard to find. On the other hand, Fano varieties can be interesting for the purpose of understanding a possible generalization of mirror symmetry: while the existence of rigid Calabi--Yau varieties challenges the notion of mirror family, it has been proposed that the whole mirror construction should be understood at the level of \emph{supermanifolds} \cite{sethi, garavusokreuzernoll} where Fano varieties appear as bosonic components. In this context, a better understanding of the derived category of Fano manifolds can shed some light on the super-Calabi--Yau formulation of homological mirror symmetry conjectures.

\subsection*{Acknowledgements}
We would like to thank Sergio Cacciatori, Jacopo Gandini, Riccardo Moschetti and Fabio Tanturri for sharing valuable insight. 
 {We would also like to express our gratitude to Akihiro Kanemitsu for helpful comments to the first version of this paper, and for pointing out the reference \cite{pasquier}. We thank the anonymous referee for pointing out a flaw in the proof of the main theorem, and for the careful and thorough reading.} EF and GM are members of the INDAM-GNSAGA. EF, GM and MR are partially supported by PRIN2017 "2017YRA3LK", GM and MR are partially supported by PRIN2020 "2020KKWT53". MK is supported by the project Narodowe Centrum Nauki 2018/31/B/ST1/02857.

\section{Construction}

We work over the field of complex numbers. In the following, we will call \emph{projective bundle} of rank $r$ (or $\PP^{r-1}$-bundle) on a smooth projective variety $X$ the projectivization $\PP(E)$ of a vector bundle $E$ of rank $r$ over $X$, together with a map $h:\PP(E)\arw X$ which itself will be called \emph{projective bundle structure}.  {With the expression  $V_d$ we will denote a vector space of dimension $d$, and we will use the notation $V_d[-m]$ for a complex equal to zero in every degree except for degree $m$, where it is isomorphic to $V_d$.}

\subsection{Generalized homogeneous roofs}
In \cite{kanemitsu}, the notion of roof has been introduced. 
Instead of the original definition, we will recall the following characterization which is the most suitable for the purpose of this paper:

\begin{proposition}\cite[Proposition 1.5]{kanemitsu}
    Let $X$ be a smooth projective Fano variety of Picard rank two, such that its extremal contractions are $\PP^{r-1}$-fibrations. Then the following are equivalent:
    \begin{enumerate}
        \item $X$ is a roof of $\PP^{r-1}$-bundles;
        \item the index of $X$ is $r$;
        \item there exists a line bundle $\Lc$ such that $\Lc$ restricts to $\Oc(1)$ on the fibers of both the extremal contractions.
    \end{enumerate}
\end{proposition}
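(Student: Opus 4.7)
The plan is to prove the cycle by handling $(2) \Leftrightarrow (3)$ via direct intersection-theoretic computations using adjunction on the $\PP^{r-1}$-fibers, and then to connect either of these with condition $(1)$ by unpacking Kanemitsu's original definition of a roof.

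For $(3) \Rightarrow (2)$ I would pick a line $\ell_i$ inside a fiber $F_i \cong \PP^{r-1}$ of the extremal contraction $h_i$. Adjunction on $F_i$ together with $K_{\PP^{r-1}} = \Oc(-r)$ gives $-K_X \cdot \ell_i = r$, while the hypothesis $\Lc|_{F_i} = \Oc(1)$ yields $r\Lc \cdot \ell_i = r$ as well. Since $h_1$ and $h_2$ are the two extremal contractions and $\rho(X) = 2$, the extremal rays of $\overline{\mathrm{NE}}(X)$ are spanned by $[\ell_1]$ and $[\ell_2]$, so $-K_X - r\Lc$ is numerically trivial. Because $X$ is Fano, $H^1(\Oc_X) = 0$, whence numerical and linear equivalence coincide on $\mathrm{Pic}(X)$ and $-K_X = r\Lc$. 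To rule out a larger index, I would observe that if $-K_X = r'L'$ with $r' > r$ then $L' \cdot \ell_1 = r/r'$ could not be a positive integer.

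For $(2) \Rightarrow (3)$ I would write $-K_X = rL$ and restrict to a fiber $F_i \cong \PP^{r-1}$: since $\mathrm{Pic}(F_i) = \ZZ$ and $-K_X|_{F_i} = \Oc(r)$, the identity $r L|_{F_i} = \Oc(r)$ forces $L|_{F_i} = \Oc(1)$, so we may take $\Lc := L$. For the connection with $(1)$ I would appeal to Kanemitsu's construction: a roof of $\PP^{r-1}$-bundles is a Fano of Picard rank two with two projective bundle structures of the same rank satisfying the balancedness condition built into either $(2)$ or $(3)$, so this direction reduces to identifying the roof property with the existence of the compatible line bundle $\Lc$ just produced.

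The main obstacle I anticipate is less at the level of calculation and more at the level of bookkeeping: matching the precise formulation of roof used by Kanemitsu with the intrinsic conditions $(2), (3)$ requires care, in particular to verify that the extremal rays really are spanned by fiber lines, which uses that both contractions are of fiber type with irreducible fibers isomorphic to $\PP^{r-1}$. Once that identification is in place, the remaining steps amount to standard intersection theory on projective bundles.
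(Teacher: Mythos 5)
The paper does not prove this proposition at all---it is cited from Kanemitsu without proof---so there is no internal argument to compare yours with; judged on its own terms, your treatment of $(2)\Leftrightarrow(3)$ is correct and complete in outline: adjunction along a fiber gives $-K_X\cdot\ell_i=r$, the classes $[\ell_1],[\ell_2]$ span $N_1(X)$ because $\rho(X)=2$ and the two contractions are the extremal fiber-type ones, numerical and linear equivalence agree on a Fano, and your divisibility remark does pin the index at exactly $r$.

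The genuine gap is the link with $(1)$, which you defer to ``unpacking Kanemitsu's definition''. This cannot be reduced to bookkeeping, because the standing hypothesis only gives $\PP^{r-1}$-\emph{fibrations}, whereas $(1)$ asserts that both contractions are projectivizations of rank-$r$ vector bundles; the implication $(3)\Rightarrow(1)$ is precisely the statement that $\Lc$ kills the obstruction to a fibration being a projective bundle. Concretely, one must show that $h_{i*}\Lc$ is locally free of rank $r$ (cohomology and base change, using $h^0(\PP^{r-1},\Oc(1))=r$ and the vanishing of higher cohomology of $\Oc(1)$ on the fibers), that the evaluation map $h_i^*h_{i*}\Lc\to\Lc$ is surjective, and that the induced morphism $X\to\PP_{B_i}(h_{i*}\Lc)$ over $B_i$ is an isomorphism because it is one on every fiber; this is exactly the mechanism the present paper invokes in the remark following the definition of generalized homogeneous roof, where $G/P\simeq\PP(h_{i*}\Oc(1,1))$. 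The converse direction also needs an argument you do not supply: if $X=\PP_{B_1}(E_1)=\PP_{B_2}(E_2)$, the tautological class $\xi_1$ restricts to $\Oc(1)$ on $h_1$-fibers, but nothing yet controls its degree on a line $\ell_2$ in an $h_2$-fiber, so producing a single $\Lc$ valid for both families (equivalently, proving $(1)\Rightarrow(2)$ via $K_X=-r\xi_1+h_1^*(K_{B_1}+\det E_1)$ intersected against $\ell_2$, together with a divisibility argument in $\mathrm{Pic}(B_1)\cong\ZZ$) is a real step, not an identification. As written, your reduction of $(1)$ to the existence of the compatible $\Lc$ assumes the conclusion.
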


Clearly, if we allow the two contractions to be projective space fibrations with different relative dimensions, the only statement which is sucsceptible to generalization is the third one. Therefore we introduce the following definition:

\begin{definition}\label{def:gen_roof}
    A \emph{generalized roof} is a smooth projective Fano variety $X$ of Picard number two such that:
    \begin{enumerate}
        \item $X$ admits two projective bundle structures;
        \item there exists a line bundle $\Lc$ on $X$ such that $\Lc$ restricts to $\Oc(1)$ on both the projective bundle structures.
    \end{enumerate}
\end{definition}


Let us now consider the situation when a generalized roof is a quotient $G/P$ of a simply connected semisimple Lie group by a parabolic subgroup. We give the following definition:

\begin{definition}\label{def:gen_hom_roof}
    A \emph{generalized homogeneous roof} is a quotient $G/P$ of a simply connected, semisimple Lie group by a parabolic subgroup, which has Picard number two and admits two projective bundle structures.
\end{definition}

By \cite[Proposition 2.6]{campanapeternellsurvey}, the only possible projective bundle structures are morphisms to $G$-Grassmannians, hence the data of a generalized homogeneous roof can be represented by a diagram like the following:

\begin{equation}
    \begin{tikzcd}
        & G/P\ar[swap]{dl}{h_1}\ar{dr}{h_2}& \\
        G/P_1 && G/P_2
    \end{tikzcd}
\end{equation}

\begin{remark}
    Note that a generalized homogeneous roof is always a generalized roof: condition $(2)$ in Definition \ref{def:gen_roof} is satisfied for every quotient $G/P$ of a simply connected semisimple lie group by a parabolic subgroup such that $G/P$ has Picard number two and it has two projective bundle structures, by fixing $\Lc:=\Oc(1,1) = h_1^*\Oc(1)\otimes h_2^*\Oc(1)$. In fact, one has $G/P\simeq\PP(h_{i*}\Oc(1,1))$ for $i\in\{1;2\}$, which is equivalent to $(2)$.
\end{remark}


We can extend the classification \cite[Section 5.2.1]{kanemitsu} to generalized homogeneous roofs by means of the same method which leads to the following result, where we adopt the same kind of nomenclature used by Kanemitsu (listed in the column ``type''):

\begin{proposition}
    Let $G/P$ be a generalized homogeneous roof. Then it falls in one of the following cases:
    \begin{table}[H]
        \centering
        \small
        \begin{tabular}{ c|c|c|c|c|c}
             & $G$ & type& $G/P$ & $G/P_1$ & $G/P_2$\\ 
             \hline
             &  &   &   &   &   \\
             &  $SL(n)\times SL(m)$  & $A_{n-1}\times A_{m-1}$ & $\PP^{n-1}\times\PP^{m-1}$ & $\PP^{n-1}$ & $\PP^{m-1}$\\ 
             &    $SL(n)$  & $A_{n-1}^M$ & $F(1, n-1, n)$ & $\PP^{n-1}$ & $\PP^{n-1}$\\
             &    $SL(n)$  & $A_{k, n-1}^G$ & $F(k, k+1, n)$ & $G(k, n)$ & $G(k+1, n)$\\
             &     {$SO(7)$}  &  {$B_3^*$} &  {$OF(1, 3, 7)$} &  {$OG(1, 7)$} &  {$OG(3, 7)$}\\
             
             &    $SO(n)$  & $B_{\frac{n-1}{2}}$ & $OF(\frac{n-3}{2},\frac{n-1}{2},n)$ & $OG(\frac{n-3}{2}, n)$ & $OG(\frac{n-1}{2}, n)$\\
             &    $Sp(n)$ ($n$ even)   & $C_{k, n/2-1}$ & $IF(k-1, k, n)$ & $IG(k-1, n)$ & $IG(k, n)$\\
             &    $Spin(n)$ ($n$ even)  & $D_n$ & $OG(\frac{n}{2}-1, n)$ & $OG(\frac{n}{2}, n)^+$ & $OG(\frac{n}{2}, n)^-$\\
             &    $F_4$ & $F_4$ &$F_4/P^{2,3}$ & $F_4/P^2$ & $F_4/P^3$\\
             &    $G_2$ & $G_2$ & $G_2/P^{1,2}$ & $G_2/P^1$ & $G_2/P^2$\\
        \end{tabular}
        \caption{Generalized homogeneous roofs}\label{tab_rooflist}
    \end{table}
\end{proposition}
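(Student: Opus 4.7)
The plan is to translate the question into a combinatorial problem about marked Dynkin diagrams and then enumerate the solutions. Recall that a rational homogeneous variety $G/P$, with $G$ simply connected semisimple and $P$ parabolic, is completely determined by the Dynkin diagram of $G$ together with the subset of its nodes indexing the simple roots not contained in the Levi of $P$; the Picard rank of $G/P$ equals the number of marked nodes. Thus generalized homogeneous roofs correspond to (possibly disconnected) Dynkin diagrams with exactly two marked nodes, subject to the two projective bundle conditions.

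The key reduction is supplied by the cited \cite[Proposition 2.6]{campanapeternellsurvey}: every projective bundle structure on $G/P$ is the natural contraction $h_i\colon G/P\arw G/P_i$ obtained by omitting one of the two markings. The fiber of $h_i$ is itself rational homogeneous of Picard one, and its marked Dynkin diagram is precisely the connected component containing the retained mark in the subdiagram obtained by deleting the node forgotten by $h_i$. The requirement that both fibers are projective spaces therefore becomes a purely combinatorial condition: for each of the two marks, removing the other must leave a connected component (containing it) which is the marked diagram of a projective space. The Picard-one rational homogeneous projective spaces are exactly $A_r$ marked at an endpoint (yielding $\PP^r$) and $C_r$ marked at the short-root end (yielding $\PP^{2r-1}$).

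With this criterion in hand, I would finish by direct case analysis over the Dynkin types. For semisimple but not simple $G$, the Picard-two condition forces two simple factors with a single mark each; the projective-space condition then forces each factor to be type $A$ with the mark at an endpoint, producing the product case $\PP^{n-1}\times\PP^{m-1}$. For simple $G$ one inspects the types $A, B, C, D, E_6, E_7, E_8, F_4, G_2$ in turn. Type $A$ yields the families $F(k, k+1, n)$ (adjacent markings) and $F(1, n-1, n)$ (two-endpoint markings). Type $B$ forces both marks onto the last two nodes, as only then can the double edge be separated off into an $A_1$ component. Type $C$ produces the family $IF(k-1, k, n)$ from adjacent markings: one resulting fiber is an $A_{k-1}$ marked at an endpoint, the other is a $C_{m-k+1}$ marked at its short-root end. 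Type $D$ singles out the two spinor nodes of the fork, giving $OG(n/2-1, n)\arw OG(n/2, n)^{\pm}$. Types $F_4$ and $G_2$ contribute their respective exceptional entries by direct inspection: in $F_4$, marking the two nodes adjacent to the double edge makes both fibers equal to $\PP^2$, while in $G_2$ marking the unique pair of nodes gives two $\PP^1$-fibers.

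The main obstacle is to rule out new configurations in the exceptional types $E_6, E_7, E_8$. Here the trivalent branching node is fatal: one checks that deleting any single node from an $E$-diagram either leaves the branching node inside a subdiagram (necessarily of type $D$, hence never admitting a projective-space marking) or leaves the branching node as an internal vertex of a type-$A$ subdiagram (hence never at an endpoint). A short enumeration of the few cases where the branching node is bypassed (e.g.\ removal of $\alpha_2$ in $E_6$, flattening the fork into $A_5$) shows that the complementary condition then fails on the other side. Once these negative checks are combined with the positive identifications above, the list of Table~\ref{tab_rooflist} is exhaustive.
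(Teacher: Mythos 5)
Your overall strategy coincides with the paper's: encode $G/P$ by a Dynkin diagram with two marked nodes, invoke \cite[Proposition 2.6]{campanapeternellsurvey} to identify the two projective bundle structures with the contractions forgetting one mark, use the recipe expressing the fiber as the homogeneous variety of the component of the punctured diagram containing the surviving mark, and reduce everything to the condition that each such marked component be of type $A_r$ marked at an endpoint or of type $C_r$ marked at the node farthest from the double edge. The paper states that the ensuing case-by-case inspection "leads to Table \ref{tab_rooflist}" without writing the cases out; your proposal is precisely an attempt to supply that inspection, so any slip there is exactly in the part the paper leaves implicit.

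There is, however, a genuine gap in your type $B$ step. The justification you give (``only then can the double edge be separated off into an $A_1$ component'') contradicts your own list of admissible fiber diagrams: when one deletes node $m-2$ from $B_m$, the component containing node $m$ is the rank-two double-edge diagram marked at its short root, which is your second admissible type with $r=2$; the corresponding fiber is $OG(2,5)\cong\PP^3$, not $\PP^1$. Running your criterion honestly, the pair of marks $\{m-2,m\}$ passes the first fiber test for every $m$, and the second test (delete node $m$ and require $m-2$ to be an endpoint of the remaining $A_{m-1}$) forces $m=3$. The configuration $(B_3,\{1,3\})$, i.e. $OF(1,3,7)=Spin(7)/P_{1,3}$, therefore satisfies both conditions: it is a $\PP^2$-bundle over $OG(3,7)$ (the projectivization of the tautological rank-three bundle) and a $\PP^3$-bundle over $OG(1,7)=Q^5$ (the projectivization of the rank-four spinor bundle, since the unipotent radical of $P_1$ acts trivially on the spin representation of the Levi and $Spin(5)$ acts transitively on $\PP^3$), it has Picard rank two, and its contraction data (bases of dimensions $5$ and $6$, fibers $\PP^3$ and $\PP^2$) match no row of the table. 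So as written your enumeration is not exhaustive with respect to your own criterion: you must either produce an argument excluding this configuration or record it as a further case. A milder instance of the same care is needed in type $D$: for $D_4$ the pairs $\{1,3\}$ and $\{1,4\}$ also pass both tests, but there no new variety arises because triality identifies the result with the listed $OG(3,8)$; the $B_3$ configuration admits no such identification, so it cannot be dismissed the same way.
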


\begin{proof}
    A rational homogeneous variety $G/P$ is uniquely determined by the data of a marked Dynkin diagram $(D_G, I_P)$, where $D_G$ is the Dynkin diagram of $G$ and $I_P$ is the set of nodes corresponding to $P$. In fact, there is a one-to-one correspondence between parabolic subgroups of $G$ and subsets of the set $I_B$ of nodes of $D_G$ (see for instance \cite[Section 2.2]{campanapeternellsurvey}). Moreover, an inclusion of parabolic subgroups $P'\subset P$ corresponds to a contraction $G/P\arw G/P'$, and to an inclusion of sets $I_{P'}\subset I_P$. The contraction is locally trivial, and the fiber is isomorphic to the rational homogeneous variety identified by the pair $(D_G\setminus I_{P'}, I_P\setminus I_{P'})$, where $D_G\setminus I_{P'}$ is given by erasing from $D_G$ the nodes $I_{P'}$ and taking the connected component containing $I_P\setminus I_{P'}$ \cite[Recipe 2.4.1]{bastoneastwood}.
    
    Let us now consider a roof $G/P$ with projections $h_i:G/P\arw G/P_i$ for $i\in\{1;2\}$. Call $(D_G, I_{P})$, $(D_G, I_{P_1})$ and $(D_G, I_{P_2})$ the marked Dynkin diagrams corresponding to respectively $G/P$, $G/P_1$ and $G/P_2$. By the above, the fibers of $h_i$ are all isomorphic to the rational homogeneous variety defined by the marked Dynkin diagram $(D_G\setminus I_{P_i}, I_P\setminus I_{P_i})$. Hence, we can check case by case the data $(D_G, I_P, I_{P_1}, I_{P_2})$ such that the rational homogeneous varieties associated to $(D_G\setminus I_{P_1}, I_P\setminus I_{P_1})$ and $(D_G\setminus I_{P_2}, I_P\setminus I_{P_2})$ are isomorphic to projective spaces, i.e. falling in one of the following two cases:
    \begin{equation*}
        \huge
        \begin{split}
            \dynkin[edge length=20pt, root radius = 2pt]{A}{xo.oo}\\
            \dynkin[edge length=20pt, root radius = 2pt]{C}{xo.oo}
        \end{split}
    \end{equation*}
    where we crossed out the nodes correspond to $I_P\setminus I_{P_i}$. This explicit analysis eventually leads to Table \ref{tab_rooflist}, thus proving our claim.
\end{proof}

\begin{remark}
    The same list of Table \ref{tab_rooflist} has been obtained by Pasquier \cite[Theorem 1.7]{pasquier} in the context of horospherical varieties of Picard rank one.  
\end{remark}

\begin{remark}
    Note that the new cases listed in Table \ref{tab_rooflist} with respect to \cite[Section 5.2.1]{kanemitsu} are  $A_{m-1}\times A_{n-1}$ for $m\neq n$, $A^G_{k, n-1}$ for $k\neq \frac{n-1}{2}$, $C_{k,n/2-1}$ for $n=\frac{3k-2}{2}$, the whole type $B_{\frac{n-1}{2}}$ construction and the isolated case we denoted as $B^*_3$.
\end{remark}

In light of the definition of Calabi--Yau pairs associated to a roof \cite[Definition 2.5]{kr2}, we formulate the following:

\begin{definition}\label{def:pair}
    A \emph{pair} $(Y_1, Y_2)$ associated to $G/P$ is the data of two smooth varieties $Y_1\subset G/P_1$ and $Y_2\subset G/P_2$ such that there is a general section $S\in H^0(G/P, \Oc(1,1))$ with $Z(h_{i*}S) = Y_i$ for $i\in\{1;2\}$.
\end{definition}

\section{Hodge theoretical analysis for type \texorpdfstring{$A_{k, n-1}$}{}: Fano vs General type scenario}

Let us consider a vector space $V\simeq\CC^n$. For any integer $k<n$, we call $G(k, V)$ the Grassmannian of $k$-linear subspaces of $V$. 
More generally, given any $r$-tuple $k_1<\dots<k_r<n$, we denote by $F(k_1,\dots, k_r,V)$ the flag variety parametrizing the $r$-tuples $V_1\subset\cdots\subset V_r\subset V$ of flags of linear subspaces of $V$. Every flag variety is Fano of Picard rank $r$, with anti-canonical class $\omega_F^{\vee} \cong \of(k_2, k_3-k_1, \dots, n-k_{r-1})$, with respect to the basis given by the pullbacks of the Pl\"ucker classes. In particular, for every $k<n$, $G(k, V)$ is a Fano $kn-k^2$-fold of index $n$.

On $G(k, V)$ we have the tautological sequence

\begin{equation}\label{eq:tautologicalseq}
    0\arw \Uc\arw V\otimes\Oc\arw \Qc \arw 0
\end{equation}

where $\Uc$ is the rank $k$ tautological bundle, with determinant $\Oc(-1)$, which reduces to the dual Euler sequence for $k=1$.

The generalized roof of type $A_{k,n-1}$ is the flag variety $F(k, k+1, V)$, which has two projections $h_1$ and $h_2$, respectively onto $G(k, V)$ and $G(k+1, V)$. Given any point $x\in G(k, V)$, and the associated $k$-linear space $W_x\subset V$, one has $h_1^{-1}(x) = \{W_y\in G(k+1, V) ~ : ~ W_x\subset W_y\}\simeq G(1, V/W_x)$. Similarly one has $h_2^{-1}(y) = \{x\in G(k, V) ~ : ~ W_x\subset W_y\}\simeq G(k, W_y)$. In particular, $h_1$ is a $\PP^{n-k-1}$ bundle, and $h_2$ a $\PP^{k}$ bundle and one has $h_{1*}\Oc(1,1) = \Qc^\vee(2)$ and $h_{2*}\Oc(1,1) = \Uc(2)$.

We obtain the diagram:

$$
\begin{tikzcd}[column sep = tiny]
& \Fl:=F(k, k+1, V)\ar{dl}{h_1}\ar[swap]{dr}{h_2} & \\
G(k, V) & &G(k+1, V)
\end{tikzcd}
$$

Consider now the zero locus $M$ of a section $S \in H^0(\Fl, \of(1,1)) \cong \Sigma_{2^{k-1},1} V$, namely the cokernel of the comultiplication map  $\W^{k-1}V\otimes \W^{k+2}V \to \W^{k}V\otimes \W^{k+1}V $.
Moreover, the pushforwards of this section with respect to the surjections $h_1$ and $h_2$ will give sections 
\begin{align*}
    S_1 &:= h_{1*}S\in H^{0}( G (k, V), \Qc^{\vee}(2));\\
     S_2& := h_{2*}S\in H^{0}( G (k+1, V), \mU(2)).
\end{align*}

 For $i=1,2$, denote by $Y_i$  the zero locus of the section $S_i$ on the respective Grassmannian and by $\overline{h_i}$ the restriction of $h_i$ to $M$. We have the following refined diagram:

\begin{equation}\label{diagramflag}
    \begin{tikzcd}
    & M \ar{dl}{\overline{h_1}}\ar[swap]{dr}{\overline{h_2}} & \\
    Y_1 \subset  G (k, V) & & G (k+1, V) \supset Y_2
\end{tikzcd}
\end{equation}

The fibers of $\overline{h_1}$ and $\overline{h_2}$ are, respectively, generically a $\PP^{n-k-2}$ and a $\PP^{k-1}$ bundle. The locus where the dimension jumps, and the fiber coincides with the whole $\PP^{n-k-1}$ (resp. $\PP^k$) corresponds to the zero loci of $\sigma_1$ (resp. $\sigma_2$), that is $Y_1$ and $Y_2$. We have the following equality in the Grothendieck ring.
\begin{lemma}\label{leq} Let $Y_1$ and $Y_2$ be as above. Then: $$
[\LL^k][Y_2]-[\LL^{n-k-1}][Y_1]+ [ G (k+1, V)][\PP^{k-1}]-[ G (k,V)][\PP^{n-k-2}]=0.
$$
\end{lemma}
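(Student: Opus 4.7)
The plan is to compute the class $[M]$ in the Grothendieck ring of varieties in two ways, by stratifying the bases $G(k,V)$ and $G(k+1,V)$ according to whether the relevant fibers of $\bar{h}_1$ and $\bar{h}_2$ jump in dimension, and then to equate the results.

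First I would analyze the fibration $\bar{h}_1 : M \arw G(k,V)$. Over a point $x \in G(k,V) \setminus Y_1$ the section $S$ restricts to a nonzero linear form on the fiber $h_1^{-1}(x) \simeq \PP^{n-k-1}$, so $\bar{h}_1^{-1}(x)$ is a hyperplane $\PP^{n-k-2}$; over $x \in Y_1$ the restriction vanishes, hence $\bar{h}_1^{-1}(x)$ is the whole $\PP^{n-k-1}$. The first key point to verify is that over the open stratum $G(k,V)\setminus Y_1$ the map $\bar{h}_1$ is Zariski-locally trivial with fiber $\PP^{n-k-2}$: this is immediate once one identifies $M$ with $\PP(\ker(S|_{\mathcal{Q}}))$ on that open locus, where $S$ is viewed as a map $\Qc \arw \Oc(1)$. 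Granted this, the scissor relation gives
\begin{equation*}
[M] \;=\; [\PP^{n-k-1}]\,[Y_1] \;+\; [\PP^{n-k-2}]\,\bigl([G(k,V)]-[Y_1]\bigr).
\end{equation*}
Using the identity $[\PP^{m}]-[\PP^{m-1}]=[\LL^{m}]$ in the Grothendieck ring, this collapses to
\begin{equation*}
[M]\;=\;[\LL^{n-k-1}]\,[Y_1] \;+\; [\PP^{n-k-2}]\,[G(k,V)].
\end{equation*}

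Performing the entirely symmetric analysis for $\bar{h}_2 : M \arw G(k+1,V)$, where the generic fiber is a $\PP^{k-1}$ and the fiber jumps to $\PP^{k}$ precisely over $Y_2$, yields
\begin{equation*}
[M]\;=\;[\LL^{k}]\,[Y_2] \;+\; [\PP^{k-1}]\,[G(k+1,V)].
\end{equation*}
Equating the two expressions for $[M]$ and rearranging produces exactly the claimed identity.

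The only real obstacle is the Zariski-local triviality of $\bar{h}_i$ on the open stratum $G/P_i \setminus Y_i$, without which one cannot factor out $[\PP^{n-k-2}]$ (resp.\ $[\PP^{k-1}]$) over the class of the base. I would dispatch this by writing $\bar{h}_i$ over the open locus as the projectivization of a vector subbundle of $\Qc$ (resp.\ of $\mU^{\vee}$) cut out by $S_i$, which is a subbundle precisely away from its zero locus; the rest of the argument is then purely formal manipulation inside $K_0(\mathrm{Var}_\CC)$.
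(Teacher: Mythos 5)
Your proof is correct and follows essentially the same route as the paper: both compute $[M]$ twice via its description as a stratified projective bundle over each Grassmannian (generic fiber $\PP^{n-k-2}$ resp.\ $\PP^{k-1}$, jumping over $Y_1$ resp.\ $Y_2$), use the relation $[\PP^m]-[\PP^{m-1}]=[\LL^m]$, and equate. Your extra verification of Zariski-local triviality on the open stratum, via the kernel subbundle cut out by $S_i$, is exactly the justification the paper leaves implicit.
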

\begin{proof}
It follows from the above description of $M$ as a stratified projective bundle with respect to both projections, and from the properties of the Grothendieck ring, e.g. $[\PP^k]-[\PP^{k-1}] = [\LL^k]$.
\end{proof}

\begin{proposition}
    
\label{cor:picard}
$Y_1$ and $Y_2$ (when of dimension $\geq 3$) have Picard rank $\rho=1$.
\end{proposition}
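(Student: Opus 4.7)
The plan is to invoke the Sommese-type Lefschetz hyperplane theorem for zero loci of ample vector bundles (see e.g.\ Lazarsfeld, \emph{Positivity in Algebraic Geometry~II}, Theorem~7.1.1): for an ample vector bundle $E$ on a smooth complex projective variety $X$ and $Y=Z(s)\subset X$ the smooth zero locus of a general global section, the restriction maps $H^{i}(X,\mathbb{Z})\to H^{i}(Y,\mathbb{Z})$ are isomorphisms for $i<\dim Y$ and injective for $i=\dim Y$. Taking $X$ to be the relevant Grassmannian, the assumption $\dim Y_{i}\geq 3$ then forces $H^{2}(Y_{i},\mathbb{Z})\cong H^{2}(X,\mathbb{Z})\cong\mathbb{Z}$, generated by the (algebraic) Pl\"ucker class. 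Combined with $H^{1}(Y_{i},\mathcal{O}_{Y_{i}})=0$ (obtained via the same theorem in degree one, or directly by Kodaira vanishing on the Fano $Y_{i}$), the exponential sequence yields $\mathrm{Pic}(Y_{i})\cong\mathbb{Z}$.

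The content of the proof therefore reduces to checking ampleness of $\mathcal{Q}^{\vee}(2)$ on $G(k,V)$ and of $\mathcal{U}(2)$ on $G(k+1,V)$. For the first, the standard identification $\mathcal{Q}^{\vee}\cong\bigwedge^{n-k-1}\mathcal{Q}\otimes(\det\mathcal{Q})^{-1}$ of a rank $n-k$ bundle with its top exterior minus one gives
\[
\mathcal{Q}^{\vee}(2)\;\cong\;\bigwedge\nolimits^{n-k-1}\mathcal{Q}\otimes\mathcal{O}(1).
\]
The universal quotient $\mathcal{Q}$ is globally generated as a quotient of the trivial bundle $V\otimes\mathcal{O}$, hence so are all its exterior powers, and the tensor product of a globally generated bundle with an ample line bundle is ample (it is a quotient of a sum of copies of $\mathcal{O}(1)$). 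Entirely analogously, on $G(k+1,V)$ one has $\mathcal{U}(2)\cong\bigwedge^{k}\mathcal{U}^{\vee}\otimes\mathcal{O}(1)$, and $\mathcal{U}^{\vee}$ is globally generated as a quotient of $V^{\vee}\otimes\mathcal{O}$, so the same argument applies; one may alternatively appeal to the Grassmann duality $G(k+1,V)\simeq G(n-k-1,V^{\vee})$, under which $\mathcal{U}(2)$ is identified with $\mathcal{Q}^{\vee}(2)$ on the dual Grassmannian.

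Finally, the genericity of $S\in H^{0}(\Fl,\mathcal{O}(1,1))$ guarantees that the pushforwards $S_{i}$ are themselves general sections of their respective bundles, so smoothness of $Y_{1},Y_{2}$ with the expected codimension follows from Bertini for zero loci of globally generated vector bundles. The one genuinely delicate step in the argument is the ampleness verification, as neither $\mathcal{Q}$ nor $\mathcal{U}^{\vee}$ is itself ample (they have trivial summands along Schubert lines); the key observation is precisely that the twist by $\mathcal{O}(1)$ in $\mathcal{Q}^{\vee}(2)$ and $\mathcal{U}(2)$ converts global generation of the exterior-power factor into ampleness, at which point Sommese's theorem delivers the claim.
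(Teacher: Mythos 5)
Your argument is correct, but it follows a genuinely different route from the paper. The paper never invokes ampleness of $\Qc^\vee(2)$ or $\Uc(2)$: it exploits the description of the hypersurface $M\subset F(k,k+1,n)$ as a stratified projective bundle over each Grassmannian, writes the resulting identity $[\LL^k][Y_2]+[\Gr(k+1,n)][\PP^{k-1}]=[M]$ in the Grothendieck ring, applies the classical Lefschetz hyperplane theorem to $M\subset F(k,k+1,n)$ together with the K\"unneth/projective-bundle formula for $F(k,k+1,n)\to \Gr(k+1,n)$, and equates graded pieces to get $b_2(Y_2)=1$ (the dimension hypothesis enters as the numerical condition $(k+1)(n-k-2)>2$, excluding only $F(1,2,4)$). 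You instead work directly on the ambient Grassmannian: the identifications $\Qc^\vee(2)\cong\bigwedge^{n-k-1}\Qc\otimes\Oc(1)$ and $\Uc(2)\cong\bigwedge^{k}\Uc^\vee\otimes\Oc(1)$ exhibit the bundles as (globally generated)$\,\otimes\,$(ample line bundle), hence ample, and Sommese's Lefschetz-type theorem gives $H^2(Y_i,\ZZ)\cong\ZZ$ and $H^1(Y_i,\Oc_{Y_i})=0$ once $\dim Y_i\geq 3$, whence $\rho=1$. (Only the small aside needs care: the Kodaira-vanishing alternative for $H^1(\Oc)$ applies only to the Fano member $Y_2$, but your primary argument via degree-one Lefschetz covers both.) Your approach is shorter and yields more at once—isomorphisms of all integral cohomology below the middle degree, in the spirit of the paper's own remark that ampleness of $\mF|_X$ forces injectivity of $\iota^*$ (citing Ottem)—while the paper's route has the advantage of reusing exactly the stratified-bundle/Grothendieck-ring bookkeeping that it needs again for the Hodge-theoretic comparison in Theorem \ref{thm:hodge}, and of making the excluded low-dimensional case explicit.
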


\begin{proof}
We show the proof for $Y_2$. From the description of $M$ as stratified projective bundle, we know that the following equivalence holds in the Grothendieck ring:
\begin{equation}\label{eq:projbundlestruct} [\LL^k][Y_2]+[\Gr(k+1, n)][\PP^{k-1}]= [M].
\end{equation}
In order to determine $\rho$, we can solve the equation and compare the appropriate degree. Namely, if we denote $\beta_i:= b_{2i}(\Gr(k+1, n))$ (i.e. the $2i$ Betti number of the Grassmannian), and we equate the components of degree $k+1$ in \ref{eq:projbundlestruct}:
\begin{equation} \label{eqn:groth}
b_2(Y_2) +\sum_{i=2}^{k+1} \beta_i = \gamma.
\end{equation}
But $M$ has dimension $(k+1)(n-k-1)+k-1$, which we need to be greater than $2(k+1)$. The equation simplifies to
$(k+1)(n-k-2)>2$. This implies that either $k=1$ and $n>4$, or $k\geq 2$ and any $n$.
The only case excluded is therefore for the roof $F(1,2,4)$, where $Y_2$ is in fact a del Pezzo surface of degree 5. Notice that in all other cases, $Y_2$ has dimension greater than 3, as requested.

We can then apply Lefschetz theorem on hyperplane section, and it follows that $\gamma= b_{2(k+1)}(F(k,k+1, n))$. On the other hand we know that $F(k, k+1, n)$ is a $\PP^k$-bundle over $\Gr(k+1, n)$. Hence, by K\"unneth formula, it follows that $b_{2(k+1)}(F(k,k+1, n))= \sum_{j=1}^{k+1} \beta_i$. Substituting in \ref{eqn:groth} we get $b_2(Y_2)= \beta_1=1$. This implies in particular that $\rho(Y_2)=1$.
The proof for $Y_1$ is very similar, and we will therefore omit it.
\end{proof}

We can compute more invariants of $Y_1$ and $Y_2$, for example as follows.\begin{lemma} For general $S$, $Y_1$ has dimension $d_1:=kn-k^2-n+k$ and canonical class $\omega_{Y_1}\cong \of_{Y_1}(n-2k-1)$. $Y_2$ has dimension $d_2:=kn + n - k^2 - 3k - 2$ and canonical class $\omega_{Y_2}\cong \of_{Y_2}(-n + 2k +1)$.
\end{lemma}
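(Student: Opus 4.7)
The plan is to apply standard dimension count and the adjunction formula to each zero locus, using the descriptions $Y_1=Z(S_1)$ with $S_1\in H^0(G(k,V),\mQ^\vee(2))$ and $Y_2=Z(S_2)$ with $S_2\in H^0(G(k+1,V),\mU(2))$ that were obtained via the pushforwards along $h_1$ and $h_2$.

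First I would compute the ranks of the two bundles cutting out the $Y_i$. On $G(k,V)$ the quotient $\mQ$ has rank $n-k$, so $\mQ^\vee(2)$ has rank $n-k$, and on $G(k+1,V)$ the tautological $\mU$ has rank $k+1$, so $\mU(2)$ has rank $k+1$. Next, using that $S$ is general (so $S_1$ and $S_2$ are general sections of globally generated bundles), a standard Bertini-type transversality argument shows that $Y_1,Y_2$ are smooth of expected codimension. Subtracting from $\dim G(k,V)=k(n-k)$ and $\dim G(k+1,V)=(k+1)(n-k-1)$ respectively yields
\[
\dim Y_1=k(n-k)-(n-k)=kn-k^2-n+k,\qquad \dim Y_2=(k+1)(n-k-1)-(k+1)=kn+n-k^2-3k-2,
\]
matching the claimed values.

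For the canonical classes I would use adjunction $\omega_{Y_i}\cong(\omega_{G/P_i}\otimes\det \Ec_i)|_{Y_i}$, where $\Ec_1=\mQ^\vee(2)$ and $\Ec_2=\mU(2)$. Recall $\omega_{G(k,V)}\cong\Oc(-n)$ and $\omega_{G(k+1,V)}\cong\Oc(-n)$ since the indices equal $n$. From the tautological sequence \eqref{eq:tautologicalseq} one has $\det\mU=\Oc(-1)$ and $\det\mQ=\Oc(1)$, hence $\det\mQ^\vee=\Oc(-1)$. Then
\[
\det\bigl(\mQ^\vee(2)\bigr)=\det\mQ^\vee\otimes\Oc(2(n-k))=\Oc(2n-2k-1),\qquad \det\bigl(\mU(2)\bigr)=\det\mU\otimes\Oc(2(k+1))=\Oc(2k+1).
\]
Plugging in, $\omega_{Y_1}\cong\Oc_{Y_1}(-n+2n-2k-1)=\Oc_{Y_1}(n-2k-1)$ and $\omega_{Y_2}\cong\Oc_{Y_2}(-n+2k+1)$, as claimed.

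The only mildly subtle point is the transversality needed to guarantee smoothness and expected codimension, which justifies the naive dimension count and the application of adjunction; this follows from the fact that both $\mQ^\vee(2)$ and $\mU(2)$ are globally generated (being twists of $G$-equivariant bundles on homogeneous varieties by an ample line bundle), together with the genericity of $S$, so this step should be treated briefly rather than as a serious obstacle. The rest is purely numerical bookkeeping of Pl\"ucker degrees.
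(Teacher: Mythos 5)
Your proposal is correct and takes essentially the same route as the paper, which also deduces the dimensions from the ranks of $\mQ^\vee(2)$ and $\mU(2)$ and obtains the canonical classes by adjunction for zero loci of (globally generated) homogeneous vector bundles; you have merely written out the determinant and degree bookkeeping that the paper leaves implicit. The transversality remark is handled at the same level of brevity in the paper, so nothing is missing.
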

\begin{proof}
The canonical class can be easily computed by adjunction, since the two varieties can be expressed as zero loci of homogeneous vector bundles. 
\end{proof}

\begin{remark}\label{rem:canonicalclass}
For $n = 2k+1$, the varieties $Y_1$ and $Y_2$ are Calabi--Yau of the same dimension $k^2-1$. However, for $l>k+1$, $Y_1$ is of general type and $Y_2$ is Fano, with dim $ Y_1 <$ dim $Y_2$.
\end{remark}

We want to understand the Hodge-theoretical relation between $Y_1$ and $Y_2$. The upshot is that the Hodge structure of $Y_2$ will be essentially the same as the one of $Y_1$, up to some class from the ambient Grassmannian(s). In order to do this, we introduce the following definition:

\begin{definition} \label{def:v-cohom}
    Let $X$ be the $n$-dimensional smooth zero locus of a section of a globally generated vector bundle $\Fc$ on $G$. If $\iota: X \hookrightarrow G$ denotes the inclusion morphism we will denote by 
   
    \[ H^n_{f}(X, \QQ):= \mathrm{Im}(\iota^*\colon H^n(G, \QQ) \to H^n(X, \QQ)). \]
    We will define the \emph{v-cohomology} $H^n_{v}(X, \QQ)$ as the orthogonal to  $H^n_{f}(X, \QQ)$ with respect to the cup product. Namely we have \[ H^n(X, \QQ) \cong H^n_{f}(X, \QQ) \oplus H^n_{v}(X, \QQ).\]
    
\end{definition}

The terminology \emph{v-cohomology} is supposed to echo both the \emph{vanishing cohomology} and the \emph{variable cohomology}, which are defined for smooth complete intersections, see \cite{peters}, \cite[2.3.3]{voisin} (similarly, its orthogonal complement is supposed to echo the \emph{fixed} part of the cohomology). It is worth noting that if $\mF$ splits as a sum of ample line bundles, then $\iota^*$ is in fact injective as a consequence of the Lefschetz hyperplane theorem, and $H^n_v$ coincides with the variable cohomology. Such property holds as well if $\mF|_X$ is ample, see \cite[5.2]{ottem}. 

We will show the following theorem.

\begin{theorem} \label{thm:hodge}
Let $Y_1$ and $Y_2$ be as in Diagram \ref{diagramflag}. Then we have the following isomorphisms of rational Hodge structures:
\begin{align*}  
H^{d_1}(Y_1, \QQ) & \cong H^{d_1}( G (k, V), \QQ) \oplus H^{d_1}_v(Y_1, \QQ), \\  
H^{d_2}(Y_2, \QQ) & \cong  H^{d_2}( G (k+1, V), \QQ) \oplus H^{d_2}_v(Y_2, \QQ). 
\end{align*} 

 Moreover
\[H^{d_1}_v(Y_1, \QQ) \cong H^{d_2}_v(Y_2, \QQ).\]
\end{theorem}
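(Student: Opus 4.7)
The plan is to identify both $H^{d_1}_v(Y_1,\QQ)$ and $H^{d_2}_v(Y_2,\QQ)$ with the same canonical direct summand of the middle cohomology $H^{\dim M}(M,\QQ)$, namely the complement of the Lefschetz image of $H^{\dim M}(\Fl,\QQ)$. The two splittings in the first part of the statement follow from a Lefschetz hyperplane theorem for zero loci of vector bundles (Sommese's theorem applied to the globally generated bundles $\Qc^{\vee}(2)$ and $\Uc(2)$): the restrictions $H^{d_i}(G(\cdot,V),\QQ)\to H^{d_i}(Y_i,\QQ)$ are injective, and combined with the Poincar\'e pairing on the middle cohomology this produces the orthogonal decomposition required by Definition \ref{def:v-cohom}.

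For the main isomorphism, I would first analyse the two projections $\overline{h_1}$ and $\overline{h_2}$ as stratified projective bundles. Since the fibres of $\overline{h_1}$ are $\PP^{n-k-2}$ over $G(k,V)\setminus Y_1$ and jump to $\PP^{n-k-1}$ over $Y_1$, the non-zero higher direct images are $R^{2j}(\overline{h_1})_*\QQ_M \cong \QQ_{G(k,V)}$ for $0\leq j\leq n-k-2$ and $R^{2(n-k-1)}(\overline{h_1})_*\QQ_M \cong (i_{Y_1})_*\QQ_{Y_1}$. The decomposition theorem, in the form of the \emph{jump procedure} of \cite{bfm}, then yields a splitting of rational Hodge structures
\[ H^{\dim M}(M,\QQ) \cong \bigoplus_{j=0}^{n-k-2} H^{\dim M-2j}(G(k,V),\QQ)\,\oplus\, H^{d_1}(Y_1,\QQ), \]
where the last summand appears thanks to the numerical identity $\dim M-2(n-k-1)=d_1$. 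The analogous argument for $\overline{h_2}$ gives
\[ H^{\dim M}(M,\QQ) \cong \bigoplus_{j=0}^{k-1} H^{\dim M-2j}(G(k+1,V),\QQ)\,\oplus\, H^{d_2}(Y_2,\QQ), \]
using $\dim M-2k=d_2$.

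Next I would compare these decompositions with the ambient $\Fl$. Since $M\subset \Fl$ is a smooth ample divisor, weak Lefschetz supplies an injection of Hodge structures $i^*\colon H^{\dim M}(\Fl,\QQ)\hookrightarrow H^{\dim M}(M,\QQ)$, while the projective bundle formula for $\Fl\to G(k,V)$ and $\Fl\to G(k+1,V)$ gives
\[ H^{\dim M}(\Fl,\QQ) \cong \bigoplus_{j=0}^{n-k-1} H^{\dim M-2j}(G(k,V),\QQ) \cong \bigoplus_{j=0}^{k} H^{\dim M-2j}(G(k+1,V),\QQ). \]
By functoriality of the Leray filtration applied to the factorisation $\overline{h_i}=h_i\circ i$, the map $i^*$ identifies each Grassmannian summand isomorphically with its counterpart and sends the extra top summand $H^{d_i}(G(\cdot,V))$ to its Lefschetz image inside $H^{d_i}(Y_i)$. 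Inserting the splitting $H^{d_i}(Y_i,\QQ)=H^{d_i}(G(\cdot,V),\QQ)\oplus H^{d_i}_v(Y_i,\QQ)$ from the first part, we obtain for each $i\in\{1,2\}$
\[ H^{\dim M}(M,\QQ) \cong i^*H^{\dim M}(\Fl,\QQ)\,\oplus\, H^{d_i}_v(Y_i,\QQ), \]
and the desired isomorphism $H^{d_1}_v(Y_1,\QQ)\cong H^{d_2}_v(Y_2,\QQ)$ of rational Hodge structures follows since both appear as the complement of the same sub-Hodge structure.

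The hard part will be the precise justification that the decomposition of $R(\overline{h_i})_*\QQ_M$ really has the clean form ``constant sheaves on the base plus one pushforward from $Y_i$'', and that the resulting splittings of $H^{\dim M}(M,\QQ)$ match the projective bundle decomposition of $H^{\dim M}(\Fl,\QQ)$ summand by summand, with every identification preserving rational Hodge structures. This is exactly the content of the jump procedure of \cite{bfm}; carrying it out requires a careful analysis of the degenerate fibres and of the naturality of each intertwining map, after which the theorem reduces to the dimension counts recorded above.
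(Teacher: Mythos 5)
Your proposal is, at its core, the same argument as the paper's: both hinge on the jump decompositions of \cite{bfm} (Prop.~48) applied to the two stratified projective bundle structures $\overline{h_1},\overline{h_2}$ on $M$, the projective-bundle/K\"unneth computation of $H^{d_M}(\Fl,\QQ)$ over either Grassmannian, the Lefschetz injection $H^{d_M}(\Fl,\QQ)\hookrightarrow H^{d_M}(M,\QQ)$ for the ample divisor $M$, and the identities $d_M-2k=d_2$, $d_M-2(n-k-1)=d_1$. Two deviations are worth flagging. First, you obtain the splittings $H^{d_i}(Y_i,\QQ)\cong H^{d_i}_f\oplus H^{d_i}_v$ from a Sommese-type Lefschetz theorem for ``globally generated'' bundles: global generation alone gives no such statement (a trivial bundle already defeats it); what you need is ampleness of $\Qc^\vee(2)$ and $\Uc(2)$. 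This does hold here, since $\Qc^\vee(1)\cong\bigwedge^{n-k-1}\Qc$ and $\Uc(1)\cong\bigwedge^{k-1}\Uc^\vee$ are globally generated and a further twist by the ample $\Oc(1)$ is ample, so your route can be repaired --- but the paper avoids the issue entirely, deducing injectivity of $\iota_i^*$ in middle degree by comparing the jump decomposition with the Lefschetz injection from $\Fl$. Second, for the final isomorphism you require a summand-by-summand compatibility of $i^*$ with both decompositions, identifying each $H^{d_i}_v(Y_i,\QQ)$ with the orthogonal complement of $i^*H^{d_M}(\Fl,\QQ)$ in $H^{d_M}(M,\QQ)$; you correctly flag this as the hard part, but it is also unnecessary. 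Since the two ``Grassmannian'' complements appearing in the two decompositions of $H^{d_M}(M,\QQ)$ are each abstractly isomorphic to $H^{d_M}(\Fl,\QQ)$ by the K\"unneth computation, one may simply cancel them as polarizable rational Hodge structures (the category is semisimple), which is exactly how the paper concludes. So your argument is sound once ``globally generated'' is upgraded to ``ample'' (or replaced by the roof comparison), and its last step can be shortened to the cancellation argument rather than the finer sheaf-theoretic matching.
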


\begin{proof}
Observe that the first part of the theorem is equivalent to the injectivity of $\iota_i^*$, where $\iota_i$ is the embedding of $Y_i$ in the corresponding Grassmannian, for $i\in\{1;2\}$. We start by applying \cite[Prop. 48]{bfm} to both sides of Diagram \ref{diagramflag}, where we do not write the shifts in cohomology for the sake of clarity, and we always take the cohomology with $\QQ$-coefficients. We obtain
\begin{align} \label{eq:eqdiag1}
    H^{d_M}(M) & \cong \bigoplus_{i=0}^{k-1} H^{d_M-2i}( G (k+1, V)) \oplus H^{d_2}(Y_2);\\
     H^{d_M}(M) & \cong  \bigoplus_{i=0}^{n-k-2} H^{d_M-2i}( G (k, V)) \oplus H^{d_1}(Y_1).
\end{align}
By K\"unneth formula we compute $H^{d_M}(\Fl)$ in two different ways as 
\begin{equation}\label{eq:kunneth}
H^{d_M}(\Fl) \cong \bigoplus_{i=0}^k H^{d_M-2i}( G (k+1, V)) \cong \bigoplus_{i=0}^{n-k-1} H^{d_M-2i}( G (k, V)).
\end{equation}
On the other hand, $M \subset \Fl$ is cut by an ample divisor, hence by the Lefschetz hyperplane section theorem we can combine Equations \ref{eq:kunneth} and \ref{eq:eqdiag1} to get injective maps
\begin{align} \label{eq:injective}
\bigoplus_{i=0}^{k} H^{d_M-2i}( G (k+1, V)) & \hookrightarrow \bigoplus_{i=0}^{k-1} H^{d_M-2i}( G (k+1, V)) \oplus H^{d_2}(Y_2); \\
    \bigoplus_{i=0}^{n-k-1} H^{d_M-2i}( G (k, V)) & \hookrightarrow  \bigoplus_{i=0}^{n-k-2} H^{d_M-2i}( G (k, V)) \oplus H^{d_1}(Y_1).
\end{align}
Since $d_M-2k=d_2$ and $d_M-2n+2k+2=d_1$, this implies that $\iota_2^*: H^{d_2}( G (k+1, V))\arw H^{d_2}(Y_2)$ is injective, and $\iota_1^*$ as well, thus proving the first part of the statement.

We then write $H^{d_i}(Y_i) \cong H^{d_i}( G _i) \oplus H^{d_i}_v(Y_i)$. From \ref{eq:eqdiag1} we get 
\[ B \oplus H^{d_1}_v(Y_1) \cong A \oplus H^{d_2}_v(Y_2), \]
where \[A= \bigoplus_{i=0}^{k-1} H^{d_M-2i}( G (k+1, V))  \oplus H^{d_M-2k}( G (k+1, V)) \] and \[B= \bigoplus_{i=0}^{n-k-2} H^{d_M-2i}( G (k, V)) \oplus H^{d_M-2l+2}( G (k, V)).\]
Thanks to \ref{eq:kunneth} we have $A\cong B$. The result follows.
\end{proof}
\begin{remark}
    The above proof, for $d_M$ odd, works also over $\mathbb{Z}$ as the decompositions in \eqref{eq:eqdiag1} are trivial: this generalizes the result \cite[Proposition 3.4]{kr2} which worked only for the Calabi--Yau case. However, when $d_M$ is even, the main difficulty is that the orthogonal decomposition in Definition \ref{def:v-cohom} works only with rational coefficients: in fact, as pointed out in \cite[2.4]{bs}, such decomposition in general does not hold over $\ZZ$ as the sum of the two sublattices can be not saturated. However, many of the tools used in the proof of \ref{thm:hodge} works over $\ZZ$ as well. 
\end{remark}
\begin{corollary}
Both $Y_1$ and $Y_2$ have only middle Hodge cohomology (either horizontal or vertical). In other words, for $p+q \neq d_i$, $H^{p,q}(Y_i)=0$ unless $p=q$.
\end{corollary}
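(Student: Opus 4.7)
The plan is to reduce the claim to Hodge purity of the ambient flag variety $\Fl=F(k,k+1,V)$ via the stratified projective bundle structures of $M$ over $Y_1$ and $Y_2$.

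First I would recall that $\Fl$ admits a Bruhat decomposition into affine Schubert cells, so $H^j(\Fl,\QQ)$ is of pure Hodge type $(p,p)$ in every degree (and vanishes for $j$ odd). Since $M\subset \Fl$ is cut out by a section of the ample line bundle $\Oc(1,1)$, the Lefschetz hyperplane theorem gives $H^j(\Fl,\QQ)\xrightarrow{\sim}H^j(M,\QQ)$ for $j<d_M$, and Poincar\'e duality propagates the conclusion to $j>d_M$. Hence $H^j(M,\QQ)$ is of pure $(p,p)$ Hodge type for every $j\neq d_M$.

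Next I would invoke the stratified projective bundle formula \cite[Prop.~48]{bfm}, applied in every degree rather than only in the middle. For $\overline{h_2}:M\to G(k+1,V)$, whose fiber jumps from $\PP^{k-1}$ to $\PP^{k}$ over $Y_2$, it yields an isomorphism of rational Hodge structures
\[ H^j(M,\QQ)\cong \bigoplus_{i=0}^{k-1}H^{j-2i}(G(k+1,V),\QQ)(-i)\ \oplus\ H^{j-2k}(Y_2,\QQ)(-k), \]
and an analogous decomposition along $\overline{h_1}:M\to G(k,V)$ where the last summand is replaced by $H^{j-2(n-k-1)}(Y_1,\QQ)(-(n-k-1))$. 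Fix now $(p,q)$ with $p+q\neq d_2$ and set $j:=p+q+2k$; then $j\neq d_M$, so $H^j(M,\QQ)$ is pure of type $(p,p)$. The Grassmannian summands on the right are likewise pure $(p,p)$ (Tate twists preserve the Hodge-Tate property), hence $H^{j-2k}(Y_2,\QQ)(-k)$ must be pure $(p,p)$ too. Unwinding the Tate twist, the component $H^{p,q}(Y_2)$ contributes to $H^{p+k,q+k}(M,\QQ)$, and purity forces $p+k=q+k$, i.e.\ $p=q$. The identical argument applied to the other decomposition handles $Y_1$.

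The only subtlety I anticipate is checking that the decomposition of \cite[Prop.~48]{bfm} indeed lifts to rational Hodge structures with the indicated Tate twists, and that the shift in the jump stratum is precisely $2k$ (resp.\ $2(n-k-1)$), reflecting the jump by one projective dimension of the fiber of $\overline{h_2}$ (resp.\ $\overline{h_1}$) over $Y_2$ (resp.\ $Y_1$). This is a standard motivic argument and I do not expect further obstacles.
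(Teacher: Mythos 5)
Your proposal is correct and follows essentially the same route as the paper: purity (Hodge--Tate type) of $H^{j}(M,\QQ)$ for $j\neq d_M$ via the Lefschetz hyperplane theorem, followed by the stratified projective bundle decomposition of \cite[Prop.~48]{bfm} applied in all degrees, which forces $H^{p,q}(Y_i)=0$ for $p+q\neq d_i$ unless $p=q$. The extra details you supply (Bruhat decomposition of $\Fl$, Poincar\'e duality above the middle degree, bookkeeping of Tate twists and the shifts $2k$ and $2(n-k-1)$) are exactly what the paper leaves implicit.
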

\begin{proof}
This holds true for $M$ thanks to the Lefschetz hyperplane theorem. The result follows from \cite[Prop. 48]{bfm}, applied as in \ref{eq:eqdiag1} to all the other degrees in cohomology.
\end{proof}
\begin{corollary} \label{cor:coho}
The middle cohomology of $Y_2$ has Hodge co-level $n-2k-1$. In other words, for $p+q= d_2$, $h^{p,q}(Y_2) = 0$ for $p<n-2k-1$, and $h^{n-2k-1,k(n-k-1)-1}(Y_2) \neq 0$ . 
\end{corollary}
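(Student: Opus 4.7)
The strategy is to transfer a maximal Hodge bidegree class from $Y_1$ to $Y_2$ via the isomorphism in Theorem \ref{thm:hodge}, once the latter is made explicit at the level of Tate twists. First I would produce a nonzero class in $H^{d_1,0}_v(Y_1)$. By the previous lemma, $\omega_{Y_1}\cong \of_{Y_1}(n-2k-1)$, which for $n\geq 2k+1$ has a nonzero section: when $n=2k+1$ the bundle is trivial and $h^0=1$ by connectedness, while for $n>2k+1$ a generic section of the globally generated line bundle $\of_G(n-2k-1)$ does not vanish identically on the irreducible subvariety $Y_1$, so its restriction is nonzero. Hence $h^{d_1,0}(Y_1)\geq 1$; since the Grassmannian $G(k,V)$ carries only Tate Hodge classes, $\iota_1^*H^{d_1,0}(G(k,V))=0$ for $k\geq 2$ (and a direct cardinality count handles the 0-dimensional case $k=1$), so the class lies in $H^{d_1,0}_v(Y_1)$.

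Next I would refine Theorem \ref{thm:hodge} to keep track of Tate twists. The stratified projective bundle decompositions of $H^{d_M}(M,\QQ)$ along $\overline{h_1}$ and $\overline{h_2}$ each place the jump-locus contribution with a Tate twist equal to its codimension in $M$, which is the relative dimension of the corresponding projective bundle. Explicitly, via $\overline{h_2}$ one obtains
\[
H^{d_M}(M)\;\cong\;\bigoplus_{i=0}^{k-1}H^{d_M-2i}(G(k{+}1,V))(-i)\,\oplus\,H^{d_2}(Y_2)(-k),
\]
and analogously via $\overline{h_1}$ with the jump-locus term being $H^{d_1}(Y_1)(-(n-k-1))$. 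Equating the two and cancelling the image of $H^{d_M}(\Fl)\hookrightarrow H^{d_M}(M)$, which by Lefschetz absorbs all the fixed Grassmannian contributions, the vanishing parts match and yield
\[
H^{d_2}_v(Y_2,\QQ)\;\cong\;H^{d_1}_v(Y_1,\QQ)\bigl(-(n-2k-1)\bigr)
\]
as rational Hodge structures. In Hodge-number language, $h^{p,q}_v(Y_2)=h^{p-(n-2k-1),\,q-(n-2k-1)}_v(Y_1)$.

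The corollary now follows in two steps. By Hodge symmetry, the nonzero class in $H^{d_1,0}_v(Y_1)$ yields a nonzero class in $H^{0,d_1}_v(Y_1)$, which the shift transports to a nonzero class in $H^{n-2k-1,\,d_1+n-2k-1}_v(Y_2)$; the arithmetic identity $d_1+n-2k-1=k(n-k-1)-1$ recovers exactly the claimed bidegree. For the vanishing half, fix $(p,q)$ with $p+q=d_2$ and $p<n-2k-1$: the fixed contribution $\iota_2^*H^{d_2}(G(k{+}1,V))$ is purely of Tate type and supported only at $(d_2/2,d_2/2)$, which sits in the admissible range $p\geq n-2k-1$ because the inequality $d_2/2\geq n-2k-1$ rearranges to $(k-1)(n-k)\geq 0$; and the vanishing contribution is zero at such $(p,q)$ since the shifted bidegree on $Y_1$ would have negative first coordinate. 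The main obstacle is precisely the Tate-twist bookkeeping in the second step: Theorem \ref{thm:hodge} is stated only up to abstract isomorphism and does not record the shift, but the entire numerical content of the corollary rests on the explicit Hodge shift $(n-2k-1,n-2k-1)$, which encodes the asymmetry of the two projective bundle ranks $n-k-1$ and $k$.
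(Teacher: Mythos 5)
Your argument is correct and is exactly the (implicit) derivation behind the corollary, which the paper states without a written proof: Theorem \ref{thm:hodge} upgraded to record the Tate twist $(n-2k-1,n-2k-1)$ coming from the two stratified projective bundle decompositions, combined with $h^{d_1,0}(Y_1)=h^0(\omega_{Y_1})=h^0(\of_{Y_1}(n-2k-1))\neq 0$ and the Tate-type nature of the fixed part. The only points left implicit on your side (the cardinality count for $k=1$, and nonemptiness/connectedness of $Y_1$) are minor and consistent with the paper's standing assumptions on the pair $(Y_1,Y_2)$.
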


\subsection{Examples}
Computing Hodge numbers of a subvariety $X$ of a Grassmannian $G (k,n)$ cut by a section of a homogeneous, completely reducible, globally generated vector bundle $\Fc$ is (almost) an algorithmic procedure.

We briefly recall the strategy. Set rank$(\mF)=r$. Also, denote by $G:= G (k,n)$. For each $j \in \mathbb{N}$, we have the $j$-th exterior power of the conormal sequence
\begin{equation}
\label{wedgeKConormal}
0\rightarrow
\Sym^j \mF^\vee|_X \rightarrow
(\Sym^{j-1} \mF^\vee \otimes \Omega_G)|_X \rightarrow
\dotso \rightarrow
(\Sym^{j-k} \mF^\vee \otimes \Omega^k_G)|_X \rightarrow
\dotso \rightarrow
\Omega^j_G|_X \rightarrow
\Omega^j_X \rightarrow
0.
\end{equation}
Since we want to determine $h^i(\Omega^j_X)$, we can compute the dimensions of the cohomology groups of all the other terms in \eqref{wedgeKConormal}, split it into short exact sequences and use the induced long exact sequences in cohomology to get the result.

Each term $(\Sym^{j-k} \mF^\vee \otimes \Omega^k_G)|_X$ is in turn resolved by a Koszul complex
\begin{equation*}
0\rightarrow
\W^r \mF^\vee \otimes \Sym^{j-k} \mF^\vee \otimes \Omega^k_G \rightarrow
\dotso \rightarrow
\mF^\vee \otimes \Sym^{j-k} \mF^\vee \otimes \Omega^k_G \rightarrow
\Sym^{j-k} \mF^\vee \otimes \Omega^k_G,
\end{equation*}
so that we are led to compute the cohomology groups of the terms above. Since by hypothesis $\mF$ is completely reducible, then those terms are completely reducible as well: a decomposition can be found via suitable plethysms. The cohomology groups can be then obtained via the usual Borel--Weil--Bott Theorem. Most of these computations can be sped up by using computer algebra systems such as Macaulay2 \cite{M2}.

To see explicit examples of these computations we refer to \cite[3.3]{dft} or \cite[3.9.1]{eg2}.

\subsubsection{The Calabi--Yau case}

As we noted in Remark \ref{rem:canonicalclass}, for the choice $n = 2k+1$ the two zero loci are smooth Calabi--Yau varieties. This case has been investigated for $k = 2$ in \cite{kr} where the pair has been proven to be derived equivalent, $\LL$-equivalent and non birational. For $k>2$ the derived equivalence is very difficult to settle. On the contrary, $\LL$-equivalence is simple to determine for every $k$, as it follows from \cite[Remark 2.8]{kr2} and the fact that $G(k, 2k+1)\simeq G(k+1, 2k+1)$.

Notice that Theorem \ref{thm:hodge} gives an isomorphism between $H^{k^2-1}(Y_1, \QQ) \cong H^{k^2-1}(Y_2, \QQ)$. It is therefore natural to conjecture the following.
\begin{conjecture}
     For $Y_1$ and $Y_2$ Calabi-Yau $k^2-1$ as above, we have that $H^{k^2-1}(Y_1, \ZZ) \cong H^{k^2-1}(Y_2, \ZZ)$, but generically $Y_1 \not\cong Y_2$.
\end{conjecture}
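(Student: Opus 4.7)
The conjecture has two parts, which I would attack separately.

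First, to strengthen the rational isomorphism of Theorem \ref{thm:hodge} to an integral one, I would re-examine every step of that proof with coefficients in $\ZZ$. In the Calabi--Yau case one has $d_M = k^2+2k-1$, which is odd precisely when $k$ is even; in that case the remark after Theorem \ref{thm:hodge} already supplies the integral statement, since the direct-sum decompositions arising from \eqref{eq:eqdiag1} are trivial. When $k$ is odd, the orthogonal splitting of Definition \ref{def:v-cohom} need not be saturated over $\ZZ$, so the cleanest route is to construct the isomorphism directly via the correspondence $M$: the morphism
\[ \Phi:=\overline{h_2}_*\overline{h_1}^*\colon H^{k^2-1}(Y_1,\ZZ)\longrightarrow H^{k^2-1}(Y_2,\ZZ) \]
is a well-defined morphism of integral Hodge structures, and combining \cite[Prop.~48]{bfm} (applied integrally where possible) with the Lefschetz hyperplane theorem for $M\subset\Fl$ should show that $\Phi$ identifies the pieces transverse to the ambient images $\iota_i^* H^{k^2-1}(G_i,\ZZ)$. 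The key technical point is to verify that these images are already primitive (saturated) sublattices, which one would attempt by pairing the restricted Schubert basis with explicit cycles on $Y_i$.

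For the generic non-isomorphism, my strategy exploits Proposition \ref{cor:picard}: both $Y_1$ and $Y_2$ have Picard rank one, hence a unique ample generator up to positive scalar. I would therefore first compare the self-intersection numbers $(\Oc_{Y_i}(1))^{k^2-1}$, computed via Schubert calculus as top intersections of the Chern classes of $\Qc^\vee(2)$ and $\mU(2)$ on the respective Grassmannians. If these differ, non-isomorphism of polarized, and hence of abstract, varieties is immediate. If they agree -- which is plausible given the duality $G(k,2k+1)\leftrightarrow G(k+1,2k+1)$ -- I would pass to finer invariants: the automorphism group of the generic member, the Hilbert scheme of lines, or a discriminantal stratification of the parameter space of sections. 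For $k=2$ the latter route is essentially the strategy carried out in \cite{kr}.

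The expected main obstacle is precisely the non-isomorphism for generic members when $k>2$. By Grassmannian duality the two parameter spaces of sections have matching dimensions, their PGL-quotients are closely related, and no general invariant is presently known to separate them for every $k$. A realistic first concrete step is an explicit \texttt{Macaulay2} computation in the case $k=3$ of the self-intersection degrees and of small discrete invariants such as automorphism groups and Fano schemes of lines. Either these already distinguish the pair, or they pinpoint the additional structure -- plausibly a finer polarized Hodge-substructure, or a feature of the (still conjectural) derived equivalence between $Y_1$ and $Y_2$ -- that must be used to obstruct isomorphism uniformly in $k$.
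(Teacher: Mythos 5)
This statement is a \emph{conjecture} in the paper: the authors offer no proof of it, so there is nothing to compare your proposal against, and the question is only whether your plan would actually close the problem. It would not, and you say as much yourself. On the integral Hodge side, the case you can genuinely settle is $d_M$ odd (i.e.\ $k$ even), where the paper's remark after Theorem \ref{thm:hodge} already observes that the argument runs over $\ZZ$ because the Grassmannian summands in \eqref{eq:eqdiag1} vanish in odd degree; for $k$ odd, saturation of the sublattices is precisely the open point, and your proposed correspondence $\overline{h_2}_*\overline{h_1}^*$ is not even well defined as written: $\overline{h_1}$ maps $M$ onto $G(k,V)$, not onto $Y_1$, so one must instead pass through the fiber $\overline{h_1}^{-1}(Y_1)$, and a degree count (Gysin pushforward into $H^{d_M}(M)$, then $\overline{h_2}_*$ of relative dimension $k-1$) lands in $H^{k^2+1}(G(k+1,V))$ rather than in $H^{k^2-1}(Y_2,\ZZ)$. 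Fixing this requires exactly the kind of integral refinement of \cite[Prop.~48]{bfm} plus a primitivity argument that nobody currently has; stating that you ``would attempt'' it by pairing Schubert classes with explicit cycles is a hope, not a step.

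The second half has a more structural problem. By Grassmannian duality $G(k,2k+1)\cong G(k+1,2k+1)$, under which $\Qc^\vee(2)$ corresponds to $\Uc(2)$, the varieties $Y_1$ and $Y_2$ are members of \emph{one and the same} family of Calabi--Yau $(k^2-1)$-folds, merely cut out by different sections attached to the same $S$. Consequently every invariant you list --- the degree $(\Oc_{Y_i}(1))^{k^2-1}$, the automorphism group, the Fano scheme of lines, the discriminantal stratification of the space of sections --- is an invariant of the generic member of that single family and is therefore automatically the same for $Y_1$ and $Y_2$; no such computation (in \texttt{Macaulay2} or otherwise) can separate them. What is needed is an argument that the specific assignment $S\mapsto(Y_1,Y_2)$ generically produces non-isomorphic members, which for $k=2$ is the delicate content of \cite{kr} (and of \cite{bpc,or} for the related intersections of Grassmannians) and is open for $k>2$. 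So your proposal correctly reproduces the evidence that makes the conjecture plausible, but both of its essential steps --- integral saturation for $k$ odd and generic non-isomorphism for $k>2$ --- remain genuine gaps, which is exactly why the paper leaves the statement as a conjecture.
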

We remark that the dimension and the Hodge numbers grow quickly, although only in the central part, thanks to \ref{cor:coho}. To give an example, for the $F(3,4,7)$ case, the two Calabi-Yau 8-folds whose Hodge numbers are collected in the following lemma.
\begin{lemma}
The non-zero Hodge numbers of the above 8-folds are:
\begin{itemize}
     \item (for $p+q \neq 8$): $h^{0,0}=h^{1,1}=1$, $h^{2,2}=2$, $h^{3,3}=3$.
     \item (for $p+q=8$): $h^{8,0}=1$, $h^{7,1}=735$, $h^{6,2}=41161$, $h^{5,3}=395626$, $h^{4,4}=825751$.
\end{itemize}
where we avoided the repetitions induced by the obvious Hodge symmetries.
\end{lemma}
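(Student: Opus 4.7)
The Hodge diamond splits into the off-middle entries and the middle row, which call for quite different arguments.

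For the entries with $p+q\neq 8$, the corollary following Theorem \ref{thm:hodge} reduces the task to computing the diagonal $h^{p,p}(Y_i)$ for $2p<8$, with Poincar\'e duality then covering $2p>8$. The same BFM-plus-Lefschetz mechanism used in the proof of Theorem \ref{thm:hodge} applies degree by degree below the middle: for $2p<d_M$ Lefschetz gives $H^{2p}(M,\QQ)\cong H^{2p}(\Fl,\QQ)$, and comparing with the BFM decomposition of $H^{2p}(M,\QQ)$ forces $H^{2p-2k}(Y_i,\QQ)\cong H^{2p-2k}(G_i,\QQ)$ as pure Hodge structures. Hence $h^{p,p}(Y_i)=h^{p,p}(G(3,7))=h^{p,p}(G(4,7))$ for $p\le 3$, and the latter counts partitions inside a $3\times 4$ box of weight $p$, producing $1,1,2,3$, exactly the first bullet of the lemma.

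For the middle row, Theorem \ref{thm:hodge} combined with $G(3,7)\cong G(4,7)$ (dual Grassmannians, so equal ambient Hodge numbers) gives $h^{p,8-p}(Y_1)=h^{p,8-p}(Y_2)$ for every $p$, so one computation suffices. I would perform it on $Y_2\subset G(4,7)$, cut by the rank $3$ bundle $\mathcal{U}(2)$, since this yields a shorter Koszul resolution than the rank $4$ bundle $\mathcal{Q}^\vee(2)$ on $G(3,7)$. The value $h^{8,0}(Y_2)=1$ is immediate from the Calabi--Yau condition, and Hodge symmetry leaves $h^{7,1}, h^{6,2}, h^{5,3}, h^{4,4}$ to be determined. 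I would then follow verbatim the algorithmic recipe recalled right before the lemma: for each $j\in\{1,\ldots,4\}$, write the $j$-th conormal sequence \eqref{wedgeKConormal}, split it into short exact sequences, resolve each term $(\Sym^{j-s}\mathcal{U}^\vee(-2)\otimes\Omega^s_{G(4,7)})|_{Y_2}$ by its Koszul complex, and evaluate the cohomology of every homogeneous summand by Borel--Weil--Bott after decomposing the plethysms into irreducible $SL(7)$-representations.

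The genuine obstacle is computational rather than conceptual: already $\Omega^4_{G(4,7)}$ splits into many irreducible Schur summands, and after tensoring with $\Sym^{\le 4}\mathcal{U}^\vee(-2)$ and inserting the Koszul wedges of $\mathcal{U}^\vee(-2)$, the alternating sums of Borel--Weil--Bott characters are hopeless to track by hand. I would implement the whole pipeline in Macaulay2 \cite{M2}, along the lines of \cite[3.3]{dft} and \cite[3.9.1]{eg2}, and read off the four middle Hodge numbers. As a consistency check, the output must reproduce $\chi(\mathcal{O}_{Y_2})=2$ (forced by Calabi--Yauness together with the vanishing of $h^{0,q}$ for $0<q<8$ supplied by the corollary) and the topological Euler characteristic predicted independently by Hirzebruch--Riemann--Roch from the Chern classes of $\mathcal{U}(2)$.
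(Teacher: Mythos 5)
Your proposal is correct and follows essentially the same route as the paper, which gives no explicit proof beyond the algorithmic recipe (conormal sequence, Koszul complexes, plethysms, Borel--Weil--Bott, Macaulay2) recalled immediately before the lemma; your additional Lefschetz/BFM argument pinning the sub-middle diagonal entries $1,1,2,3$ to those of the ambient Grassmannian is a valid supplement consistent with the paper's corollary. One minor slip: for $F(3,4,7)$ the bundle $\mathcal{U}(2)$ on $G(4,7)$ has rank $4$, the same as $\mathcal{Q}^\vee(2)$ on $G(3,7)$, so neither side offers a shorter Koszul resolution, though this does not affect the argument.
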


\subsubsection{21 points and a Fano 6-fold}
Let us consider now an example for which we have both a derived categorical and a Hodge theoretical statement. Consider in fact the generalized homogeneous roof $F(1,2,6)$ and the associated pair $(Y_1, Y_2)$ in the sense of Definition \ref{def:pair}. Via the Euler sequence on $\PP^5$, one can immediately compute that $Y_1$ consists of 21 points. On the other hand $Y_2$ is a Fano 6-fold and its Hodge numbers, which can be already found in \cite[Appendix C]{eg2}, are as follows:

\begin{center}
{\small
\[\begin{matrix}
&&&&&&&&1 &&&&&&&&\\
&&&&&&&0&&0&&&&&&&\\
&&&&&&0 &&1&&0&&&&&&\\
 &&&&& 0 && 0 && 0 &&0&&& \\
&&&&0 &&0 && 2 &&0 && 0 &&&&\\
&&&0&&0 & & 0 & &0 && 0 && 0 &&&\\
&&0 && 0 && 0 &&22 &&0 &&0 && 0&&\\
&&&0&&0 & & 0 & &0 && 0 && 0 &&&\\
&&&&0 &&0 && 2 &&0 && 0 &&&&\\
&&&&& 0 && 0 && 0 &&0&&& \\
&&&&&&0 &&1&&0&&&&&&\\
&&&&&&&0&&0&&&&&&&\\
&&&&&&&&1 &&&&&&&&
\end{matrix}\]}
\end{center}

Since $h^0(\PP^5)=1$ and $h^6( G (2,6))=2$ the two v-cohomology subspaces are both 20-dimensional, and by Theorem \ref{thm:hodge}, we have $H^0_v(Y_1) \cong H^6_v(Y_2)$. Also, by the results of the next section, we have an embedding of the derived category of $Y_1$ in the derived category of $Y_2$.

\subsubsection{A general type 4-fold and a Fano 6-fold}

The last example we consider is pair $(Y_1,Y_2)$ associated to $F(2,3,6)$: $Y_1$ is a 4-fold of general type with $\omega_{Y_1} \cong \of_{Y_1}(1)$, and that $Y_2$ is a Fano 6-fold of index one. The Hodge numbers for $Y_1$ are:

\begin{center}
{\small
\[\begin{matrix}
&&&&&&1 &&&&&&\\
&&&&&0&&0&&&&&\\
&&&&0 &&1&&0&&&&\\
&&& 0 && 0 && 0 &&0&&& \\
&&15 &&672 && 2271 &&672 && 15 &&\\
&&& 0 && 0 && 0 &&0&&& \\
&&&&0 &&1&&0&&&&\\
&&&&&0&&0&&&&&\\
&&&&&&1 &&&&&&
\end{matrix}\]}
\end{center}

On the other hand the Hodge numbers for $Y_2$ are:

\begin{center}
{\small
\[\begin{matrix}
&&&&&&&&1 &&&&&&&&\\
&&&&&&&0&&0&&&&&&&\\
&&&&&&0 &&1&&0&&&&&&\\
 &&&&& 0 && 0 && 0 &&0&&& \\
&&&&0 &&0 && 2 &&0 && 0 &&&&\\
&&&0&&0 & & 0 & &0 && 0 && 0 &&&\\
&&0 &&15 &&672 && 2272 &&672 && 15 && 0&&\\
&&&0&&0 & & 0 & &0 && 0 && 0 &&&\\
&&&&0 &&0 && 2 &&0 && 0 &&&&\\
&&&&& 0 && 0 && 0 &&0&&& \\
&&&&&&0 &&1&&0&&&&&&\\
&&&&&&&0&&0&&&&&&&\\
&&&&&&&&1 &&&&&&&&
\end{matrix}\]}
\end{center}

We can immediately check that $h^4( G (2,6))=2$ and $h^6( G (3,6))=3$: hence the two v-cohomologies subspaces have the same rank and by Theorem \ref{thm:hodge} one has:
\begin{align*}
    H^4(Y_1, \QQ) &\cong \QQ^2 \oplus H^4_v(Y_1)\\
    H^6(Y_2, \QQ) &\cong \QQ^3 \oplus H^6_v(Y_2),
\end{align*}
with $H^4_v(Y_1)\cong  H^6_v(Y_2)$.

We conjecture the categorical equivalent of this result: i.e we expect that $D^b(Y_1)$ injects in $D^b(Y_2)$. Notice that this would make $Y_2$ a Fano host for the visitor $Y_1$ in the sense of, e.g. \cite{kkll}.
 
\section{Derived embedding and window categories}

\subsection{Setup and general strategy}\label{subsection:setup}

Hereafter we will heavily rely on the notion of gauged linear sigma model (GLSM). Therefore, let us begin this section by recalling some general terminology. 

\begin{definition}\label{def:glsm}
    We call \emph{gauged linear sigma model} $(W, G, \CC^*_R, f)$ the following data:
    \begin{enumerate}
        \item A finite dimensional vector space $W$;
        \item A linear reductive group $G$ with an action on $W$;
        \item An action of $\CC^*$ on $W$ called \emph{$R$-symmetry}, traditionally denoted by $\CC^*_R$;
        \item A polynomial $f:W\arw\CC$ called \emph{superpotential}.
    \end{enumerate}
    Moreover, we require the following conditions to hold:
    \begin{enumerate}
        \item The $G$-action and the $\CC^*_R$-action commute on $W$;
        \item $f$ is $G$-invariant and $\CC^*_R$-homogeneous with positive weight.
    \end{enumerate}
\end{definition}

\begin{definition}\label{def_glsmphase}
    Let $(W, G, \CC^*_R, f)$ be a GLSM. We call \emph{phase} of the GLSM a chamber of the associated polyhedral fan (see \cite[Theorem 3.3]{halic} for details on the walls-and-chambers structure).
\end{definition}

\begin{definition}\label{def:glsm_general}
    Let $(W, G, \CC^*_R, f, I)$ be a GLSM phase, where $I$ is the associated chamber. We call \emph{critical locus} of the superpotential:
    \begin{equation}
        \operatorname{Crit}(f):=Z(df)
    \end{equation}
    where $df$ is the gradient of $f$. Moreover, we call \emph{vacuum manifold} the GIT quotient:
    \begin{equation}
        Y_I = \operatorname{Crit}(f)\git_\tau G. 
    \end{equation}
    for any $\rho_\tau\in I$.
\end{definition}

Let us fix a complex vector space $V_n$ of dimension $n$. The data of the generalized roof of type $A_{1, n-1}^G$ is related to the flag variety $F(1, 2, V_n)$, but for our purpose it will be more convenient to consider the isomorphic roof $A_{n-2, n-1}^G$ which can be summarized by the following diagram:

\begin{equation}\label{eq:diagram12n}
    \begin{tikzcd}
        & F(n-2,n-1,V_n)\ar[swap]{dl}{h_{ {2}}}\ar{dr}{h_{ {1}}} & \\
        G(n-2,V_n) & & G(n-1, V_n)
    \end{tikzcd}
\end{equation}


Fix a general global section $S$ of $\Oc(1,1)$: it cuts a smooth hypersurface $M\subset {F(n-2,n-1,V)}$, and $h_{1*}S$ and $h_{2*}S$ define the pair $(Y_1, Y_2)$, where $Z(h_{1*}S)$ is a set of points and $Z(h_{2*}S)$ is a Fano $(2n-6)$-fold of index $n-3$. Our goal now is to prove the existence of a fully faithful functor $\dbcoh Y_1\xhookrightarrow{\,\,\,\,\,\,}\dbcoh Y_2$. This will be done in several steps:
\begin{enumerate}
    \item[$\circ$] Consider the varieties $X_+:=\operatorname{tot}(\Uc^\vee(-2)\arw G(n-1, V_n))$ and $X_-:=\operatorname{tot}(\Qc(-2)\arw G(n-2, V_n))$, i.e. the total spaces of $ {\Uc^\vee}(-2)$ and $\Qc(-2)$ respectively on $G( {n-1, V_n})$ and $G(n-2, V_n)$. In Section \ref{subsection:glsm}, given an appropriate vector space $W$ and affine subvarieties $Z_\pm$, we will find explicit GIT descriptions $X_\pm:=(W\setminus Z_\pm)/GL(V_{n-1})$, and this will allow us to define two birational maps $i_\pm:X_\pm\dashrightarrow \Xc:=[W/GL( {V_{n-1}})]$ which are always isomorphisms outside loci of codimension at least two.
    
    \item[$\circ$] In Section \ref{subsection:criticalloci} we fix the data $(W, G, \CC^*_R, f)$ of a GLSM such that the vacuum manifolds of two different phases are isomorphic to $Y_1$ and $Y_2$. This is necessary to identify $\dbcoh Y_1$ and $\dbcoh Y_2$ with the right $B$-brane categories via Kn\"orrer periodicity \cite[Theorem 3.4]{shipman}.
    
    \item[$\circ$] In Section \ref{subsection:windows} we identify a subcategory $\Wc_0\subset \dbcoh(\Xc)$ such that the functors $i_\pm^*|_{\Wc_0}:\Wc\arw\dbcoh(X_\pm)$ are fully faithful, and we define $\Wc_0$ such that $i_\pm^*|_{\Wc_0}$ is also essentially surjective. This determines a fully faithful functor $\dbcoh X_-\xhookrightarrow{\,\,\,\,\,\,}\dbcoh X_+$ which is lifted to a functor of $B$-brane categories in Section \ref{subsection:branes}. The latter yields the desired functor $\dbcoh Y_{1}\xhookrightarrow{\,\,\,\,\,\,}\dbcoh Y_{2}$ once composed with Kn\"orrer periodicity.
\end{enumerate}

\subsection{Gauged linear sigma model and variation of GIT}\label{subsection:glsm}


Let us describe how the spaces $X_+$ and $X_-$ can be related by a variation of GIT.

\begin{lemma}\label{lem:git_quotients}
    Let us consider the vector space $W = \Hom(V_{k+1},V_n)\oplus \Hom(V_{k+1}, \Sym^2(\wedge^{k+1} V_{k+1}^\vee))$, with $GL(V_{k+1})$ acting on $V_{k+1}$ via the fundamental representation and trivially on $V_n$.
    There exist two GIT quotients of $W$ with respect to $GL(V_{k+1})$:
    \begin{equation}
        \begin{split}
            X_+ &=  W \git_+ GL(V_{k+1})\simeq \operatorname{tot}(\Uc^\vee(-2)\arw G(k+1, V_n))\\
            X_- &=  W \git_- GL(V_{k+1})\simeq \operatorname{tot}(\Qc(-2)\arw G(k, V_n)).
        \end{split}
    \end{equation}

\end{lemma}

\begin{proof}
 By assumption, one has the following $GL(V_{k+1})$-action on $W$:

\begin{equation}\label{eq:gl2action}
    \begin{tikzcd}[row sep=tiny, /tikz/column 1/.append style={anchor=base east} ,/tikz/column 3/.append style={anchor=base west}]
        GL(V_{k+1})\times W \ar{r} & W \\
        g, (B,\omega) \ar[maps to]{r} & (Bg^{-1}, \omega g^{-1}\det g^2 )
    \end{tikzcd}
\end{equation}

where $B$ is a $n\times k$ matrix and $\omega$ is a row vector in $V^\vee_{k+1}$, i.e. a linear map from $V_{k+1}$ to the one dimensional space $\wedge^{k+1} V_{k+1}$. Given a character $\rho:GL(V_{k+1})\arw\CC^*$ we have the $GL(V_{k+1})$-\emph{semistable locus}:

\begin{equation}\label{eq:stabcondition}
        V^{ss}_\rho=\{(B,\omega)\in W : \{0\}\times W\,\,\,\cap\,\,\,\overline{\{(\rho(g)^{-1}, g.(B, \omega))|g\in GL(V_{k+1})\}} =\emptyset\}
    \end{equation}
    
and the \emph{unstable locus} will be simply the set $Z_\rho:= V\setminus V^{ss}_\rho$. Consider now the family of characters:

\begin{equation*}
    \begin{tikzcd}[row sep=tiny, /tikz/column 1/.append style={anchor=base east} ,/tikz/column 3/.append style={anchor=base west}]
        GL(V_{k+1}) \ar{r}{\rho_\tau} &  \CC^* \\
        g \ar[maps to]{r} & \det(g)^{-\tau}.
    \end{tikzcd}
\end{equation*}

        §We can describe the unstable locus for the chamber $\tau>0$ as the set $Z_+$ of pairs $(B, \omega)\in W$ such that there exists a one-parameter subgroup $\{g_t\}\subset GL(V_{k+1})$ fulfilling the following conditions:

\begin{enumerate}
    \item\label{cond:convergencepositive1} The expression $\det(g_t)$ converges to zero for $t\arw 0$ (so that $\rho^{-1}_\tau(g_t)$ converges to zero)
    \item\label{cond:convergencepositive2} The expression $g_t.(B, \omega)$ has a limit in $V$ for $t\arw 0$.
\end{enumerate}

By applying such conditions to the action \ref{eq:gl2action} we find $Z_+ = \{(B,\omega)\in W :\operatorname{rk}(B) < k+1\}$, and then the corresponding GIT quotient with respect to the chamber $\tau>0$ is:

$$
W\git_+ GL(V_{k+1}) = (W\setminus Z_+)/GL(V_{k+1}) = X_+.
$$

Let us now construct the GIT quotient with respect to the chamber $\tau<0$: here the analysis is slightly more involved. For $l = (l_1, \dots, l_{k+1})\in\ZZ^{k+1}$ we consider the following one-parameter subgroups:

\begin{equation}
    \begin{tikzcd}[row sep=tiny, /tikz/column 1/.append style={anchor=base east} ,/tikz/column 3/.append style={anchor=base west}]
        t \ar[maps to]{r} & g_l(t)=\operatorname{diag}(\{t^{l_1},\dots, t^{l_{k+1}}\})
    \end{tikzcd}
\end{equation}

Here the unstable locus is given by pairs $(B, \omega)$ such that there exists a $g_l$ fulfilling the following conditions:
\begin{enumerate}
    \item\label{cond:convergencenegative1} $\det(g_l)^{-1}$ converges to zero for $t\arw 0;$ 
    \item\label{cond:convergencenegative2} $g_l .(B, \omega)$ has a limit in $V$ for $t\arw 0.$
\end{enumerate}
The only possibility for a pair $(B, \omega)$ to admit such conditions is that either $\omega = 0$ or there exist a nontrivial intersection $\ker(\omega)\cap\ker(B)$. Hence we find:
$$
W\git_- GL(V_{k+1}) = \{(B, \omega)\in W : \omega\neq 0, \ker(\omega)\cap\ker(B) = \{0 \}\}/GL(V_{k+1}).
$$
For every $\omega\neq 0$, there always exists an element $g_\omega\in GL(V_{k+1})$ such that $g_\omega.\omega = (1,0, \dots, 0)$. Hence, we can rewrite the quotient as

\begin{equation}\label{eq:git_quotient-}
    W\git_- GL(V_{k+1}) = \{(B, \omega)\in W : \omega  =(1,0,\dots,0), \ker(\omega)\cap\ker(B) = \{0 \}\}/\operatorname{Stab}
\end{equation}

with the stabilizer 

\begin{equation}
\label{eq:stab}
    \operatorname{Stab} = \left\{\left(\begin{matrix}
    \det h^{-2} & 0 \\
    \kappa & h
    \end{matrix}\right)\in GL(V_{k+1}) : \kappa\in \CC^k, h\in GL(V_k)\right \}
\end{equation}

Let us write $B = (v|C)$, i.e. rename by $v$ the first column of $B$ and by $C$ the rest of the matrix.: we see that the kernel condition imposes that $C$ must have maximal rank. Hence, once defined $\wt W := \{(v, C)\in V_n\oplus\Hom(V_{n-2}, V_n) : \rk C = n-2\}$  Equation \ref{eq:git_quotient-} can be rewritten in the following way:

\begin{equation}\label{eq:git_quotient-_stab}
    W\git_- GL(V_{k+1}) = \wt W /\operatorname{Stab}
\end{equation}

where the $\operatorname{Stab}$-action is:

\begin{equation}
    (g, v , C)\longmapsto (v\det h^{2}+C\kappa, C h^{-1}).
\end{equation}

Observe that $v$, up to the action of an element of $\operatorname{Stab}$, lies in the orthogonal complement of the $k$-dimensional subspace of $V_n$ spanned by the columns of $C$: this identifies the variety of Equation \ref{eq:git_quotient-_stab} with the total space of $\Qc(-2)$ on $G(k, V_n)$.

\end{proof}

In this way we constructed two open embeddings $i_\pm:X_\pm\xhookrightarrow{\,\,\,\,\,\,\,}\Xc$. The varation of GIT we described can be extended to vector bundles described by $GL(V_{k+1})$-representations in the following way:

\begin{definition}\label{def:vgit_bundles_general}
    Consider a representation $\Gamma:GL(V_{k+1})\arw\operatorname{End}(V_\Gamma)$, and the associated vector bundle $\Ec_+ = \left(W\setminus Z_+ \times V_{\Gamma}\right) / GL(V_{k+1})$ on $X_+$. Then we define the \emph{variation of GIT} of $\Ec$ to be the following vector bundle on $X_-$:
    
    \begin{equation}
        \vgit(\Ec_+) := i_-^* \Ec
    \end{equation}

    where $\Ec$ is the vector bundle on $\Xc$ given by $\Ec = \left(W \oplus V_{\Gamma}\right) / GL(V_{k+1})$.
\end{definition}

By the way we construct $\vgit(\Ec_+)$ in Definition \ref{def:vgit_bundles_general}, we see that:

\begin{equation}
    \vgit(\Ec_+) = \left(W\setminus Z_- \times V_{\Gamma}\right) / GL(V_{k+1}).
\end{equation}

\subsection{The critical loci}\label{subsection:criticalloci}
Let us now describe how the variation of GIT not only links the two total spaces $X_+$ and $X_-$, but also $Y_2$ and $Y_1$ as GIT quotients of the critical locus of a suitable superpotential, where $Y_1:= Y_+$ and $Y_2 := Y_-$ in the language of in Definition \ref{def:glsm_general}. To this purpose, we need a very explicit description of a general section $S\in H^0(F, \Oc(1,1))$ and its two pushforwards.\\
\\
We will denote points in $G(k+1, V_n)$ as classes $[B]$ where $B\in \Hom(V_{k+1}, V_n)$ of maximal rank, with respect to the equivalence relation given by the fundamental $GL(V_{k+1})$-action on  $V_{k+1}$ Similarly, points in $G(k, V_n)$ will be represented as classes $[A]$ with respect to an analogous $GL(V_k)$-action. Consequently, points in $F$ will be pairs $([A], [B])$ with the additional condition of the image of $A$ as a linear map being contained in the image of $B$.  In this language, a section $S\in H^0(F, \Oc(1,1))$ will be defined by an equivariant map:

\begin{equation}\label{eq:hyperplanesection}
    \begin{tikzcd}[row sep=tiny, column sep = large, /tikz/column 1/.append style={anchor=base east} ,/tikz/column 3/.append style={anchor=base west}]
        \displaystyle (A, B) \ar{r}{S} & S_{i_1\dots i_k j_1\dots j_{k+1}} A_{[1}^{\,\,\,\,i_1}\cdots A_{k]}^{\,\,\,\,i_k}
        B_{[1}^{\,\,\,\,j_1}\cdots B_{k+1]}^{\,\,\,\,j_{k+1}}
    \end{tikzcd}
\end{equation}

where we used the ``physicist's'' convention of summing up repeated higher and lower indices, and enclosing by square brackets skew-symmetrized indices (e.g. $B_{[1}^{\,\,\,\,j} B_{2]}^{\,\,\,\,k} = B_{1}^{\,\,\,\,j} B_{2}^{\,\,\,\,k}-B_{2}^{\,\,\,\,j} B_{1}^{\,\,\,\,k}$ up to an irrelevant constant).

In the following, consider the functions $P: \Hom(V_k, V_n)\arw V_n^\vee$ and $Q:\Hom(V_{k+1}, V_n)\arw V_n$, where $P(A) = (P_1(A),\dots,P_n(A))$ and $Q(A) = (Q^1(A),\dots,Q^n(A))^T$ are given by: 
\begin{equation}
    \begin{split}
        P_r(A) &= S_{i_1\dots i_k j_1\dots j_{k+1}}A_{[1}^{\,\,\,\,i_1}\cdots A_{k]}^{\,\,\,\,i_k} \delta^{j_1}_{[r} A_{1}^{\,\,\,\,j_2}\cdots A_{k]}^{\,\,\,\,j_{k+1}}\\
        Q^r(B) &= S_{i_1\dots i_k j_1\dots j_{k+1}}B_{[1}^{\,\,\,\,i_1}\cdots B_{k}^{\,\,\,\,i_{k}}B_{k+1]}^{\,\,\,\,r} B_{[1}^{\,\,\,\,j_1}\cdots B_{k+1]}^{\,\,\,\,j_{k+1}}.
    \end{split}
\end{equation}

\begin{lemma}
    The pushforwards of $S$ with respect to $h_1$ and $h_2$ are defined by the following functions:
    
    \begin{equation}
        \begin{tikzcd}[row sep=tiny, column sep = large, /tikz/column 1/.append style={anchor=base east} ,/tikz/column 3/.append style={anchor=base west}]
            \displaystyle A\ar{r}{h_{1*}S} & P(A), \\
            \displaystyle B\ar{r}{h_{*2}S} & Q(B), \\
        \end{tikzcd}
    \end{equation}
    
    which are the images of $h_{1*}S$ and $h_{2*}S$ respectively inside $H^0(G(k, V_n), V_n^\vee\otimes\Oc(2))$ and $H^0(G(k+1,V_n), V_n\otimes\Oc(2))$.
\end{lemma}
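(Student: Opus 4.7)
The plan is to use the general principle that, for a projective bundle $h\colon\PP(E)\to X$ parametrizing lines in $E$, pushing forward a section of $\Oc_{\PP(E)}(1)\otimes h^*L$ returns a section of $E^\vee\otimes L$ on $X$, recovered fibrewise as the linear form on $E_x$ given by restricting the original section to $\PP(E_x)$. Here $h_1$ realizes $F$ as $\PP_{\PP(V)}(\Qc)$ and $h_2$ realizes $F$ as $\PP_{G(2,V)}(\Uc)$, and a direct identification of the relative $\Oc(1)$'s gives $\Oc(1,1)=\Oc_{F/\PP(V)}(1)\otimes h_1^*\Oc_{\PP(V)}(2)=\Oc_{F/G(2,V)}(1)\otimes h_2^*\Oc_{G(2,V)}(1)$, so the projection formula yields $h_{1*}\Oc(1,1)=\Qc^\vee(2)$ and $h_{2*}\Oc(1,1)=\Uc^\vee(1)\cong\Uc(2)$ (the latter via $\Uc^\vee\cong\Uc\otimes\Oc(1)$ on a rank-two Grassmannian).

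For $h_1$, I parametrize the fiber over $[v]$ by choosing a representative $v'\in V$ of a class in $V/\langle v\rangle$ and lifting to the matrix $B=(v\mid v')$. Substituting $B_1=v$, $B_2=v'$ in \eqref{eq:hyperplanesection} produces two terms, each linear in $v'$; relabeling dummy indices and using the antisymmetry of $S_{ijk}$ in $j,k$ intrinsic to \eqref{eq:hyperplanesection}, they combine into a single linear form in $v'$ with coefficients homogeneous of degree two in $v$. The form vanishes when $v'\in\langle v\rangle$, hence descends to a linear functional on $V/\langle v\rangle$ twisted by $\Oc(2)|_{[v]}$, i.e.\ an element of $\Qc^\vee(2)|_{[v]}$; its canonical lift to $V^\vee\otimes\Oc(2)$ via the dualized tautological sequence reproduces $P_r$ up to an overall nonzero constant.

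For $h_2$, the fiber over $[B]$ is $\PP(V_2)$ with $V_2=\langle B_1,B_2\rangle$; parametrizing it by $v=\alpha B_1+\beta B_2$ and substituting in \eqref{eq:hyperplanesection} gives directly a linear form on $V_2$ whose components in the basis $(B_1,B_2)$ are $\phi_a=S_{ijk}B_a^{\,i}B_{[1}^{\,j}B_{2]}^{\,k}$ for $a=1,2$. This is an element of $V_2^\vee=\Uc^\vee|_{[B]}$. To read it inside $V\otimes\Oc(2)$ via the inclusion $\Uc\hookrightarrow V\otimes\Oc$, I use the rank-two identification $V_2^\vee\cong V_2\otimes\wedge^2 V_2^\vee$, which converts $\phi$ into the vector $\phi_2 B_1-\phi_1 B_2\in V_2$ tensored with the Pl\"ucker generator of $\wedge^2 V_2^\vee$. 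That generator is exactly the extra skew-factor $B_{[1}^{\,i}B_{2]}^{\,r}$ appearing in the formula, implementing the twist $\Uc^\vee(1)\cong\Uc(2)$, and the end result agrees with $Q^r$ up to sign.

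The only genuine obstacle is bookkeeping. One has to track carefully the discrepancy between the relative $\Oc(1)$ for each projection and the corresponding $\Oc(1,0)$ or $\Oc(0,1)$ class on $F$, the identification $\Uc^\vee\cong\Uc\otimes\Oc(1)$ on $G(2,V)$, and the normalization constants introduced by the various skew-symmetrizations. None of these affect the conclusions of the rest of the section, since rescaling $P$ or $Q$ by a nonzero scalar does not change the zero loci $Y_1=Z(P)\subset\PP(V)$ and $Y_2=Z(Q)\subset G(2,V)$.
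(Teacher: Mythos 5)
Your proposal is correct and follows essentially the same route as the paper: a direct coordinate computation exploiting the two projective-bundle structures on $F(1,2,V)$ together with the identifications $h_{1*}\Oc(1,1)=\Qc^\vee(2)$ and $h_{2*}\Oc(1,1)=\Uc(2)$. The only (immaterial) difference is one of direction: you derive $P$ and $Q$ by restricting $S$ to the fibers of $h_1$ and $h_2$, so that membership in $\Qc^\vee(2)\subset V^\vee\otimes\Oc(2)$ and $\Uc(2)\subset V\otimes\Oc(2)$ is automatic, whereas the paper starts from the explicit formulas and verifies this membership via the contraction identities (pairing $P$ with $v$ and $Q$ with vectors annihilating $\operatorname{im}(B)$), then matches zero loci; both arguments are complete up to the harmless normalization ambiguities you flag.
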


\begin{proof}
    Note that moving $A$ in $[A]$ means sending $A$ to $Ag^{-1}$ for some $g\in GL(V_k)$, and this rescales $P(A)$ by $\det g^{-2}$: hence, $P(A)$ is clearly an element of the fiber $V_n^\vee\otimes\Oc(2)_{[v]}$. Observe now that the dual of the tautological embedding, $V_n^\vee\otimes\Oc(2)\twoheadrightarrow \Uc^\vee(2)$, acts on the fiber over $[A]$ as the projection onto $\operatorname{im}(A)$. Choose now $v\in \operatorname{im}(A)$. Since the wedge product of $k+1$ vectors spanning a $k$-dimensional space vanishes, one has:
    
    \begin{equation}
            P_r(A) v^r = S_{i_1\dots i_k j_1\dots j_{k+1}}A_{[1}^{\,\,\,\,i_1}\cdots A_{k]}^{\,\,\,\,i_k} \delta^{j_1}_{[r} A_{1}^{\,\,\,\,j_2}\cdots A_{k]}^{\,\,\,\,j_{k+1}}v^r = 0
    \end{equation}
     
    and therefore $P([v])$ lies in the image of $\Qc^\vee(2)_{[A]}$ in $V^\vee\otimes\Oc(2)_{[A]}$. The case of $Q$ can be addressed in a similar way: we see that $Q([B])\in V_n\otimes\Oc(2)_{[B]}$ and we consider the projection $V_n\otimes\Oc(2)\twoheadrightarrow \Qc(2)$ which acts on the $[B]$-fiber by projecting on the vector space $\operatorname{im}(B)^\perp\subset V$. For any $v\in \operatorname{im}(B)^\perp$ we observe that:
     
     \begin{equation}
        \begin{split}
            Q^r(A) v_r = S_{i_1\dots i_k j_1\dots j_{k+1}}B_{[1}^{\,\,\,\,i_1}\cdots B_{k}^{\,\,\,\,i_{k}}B_{k+1]}^{\,\,\,\,r} B_{[1}^{\,\,\,\,j_1}\cdots B_{k+1]}^{\,\,\,\,j_{k+1}}v_r & = 0
        \end{split}
    \end{equation}
    because we contract a column of $B$ with $v$, and they are orthogonal. This proves that $Q([B])$ lies in the image of $\Uc(2)_{[B]}$ in $V_n\otimes\Oc(2)_{[B]}$. By computing the zeroes in coordinates, the zero locus of $S$ is exactly the locus where the projections along $h_1$ and $h_2$ increase their dimension over the zeroes of $h_{i*}(S)$.
\end{proof}

\begin{lemma}\label{lem:glsm_crit_loci}
    Consider the GLSM given by the data $(W, GL(V_{k+1}), \CC^*_R, f)$ where the $R$-symmetry acts trivially on $\Hom(V_{k+1}, V_n)$ and with weight two on $\Hom(V_{k+1}, \Sym^2\wedge^{k+1}V_{k+1}^\vee)$, and the superpotential $f$ is defined as:
    \begin{equation}
        \begin{tikzcd}[row sep=tiny, column sep = large, /tikz/column 1/.append style={anchor=base east} ,/tikz/column 3/.append style={anchor=base west}]
            W \ar{r}{f} & \CC \\
            (B, \omega) \ar[maps to]{r} & \phi(B)\cdot\omega
        \end{tikzcd}
    \end{equation}
    where $\phi$ is the section of $\Uc(2)$ whose image in $V\otimes\Oc(2)$ is $Q$. Then one has the following isomorphisms:
    \begin{equation}
        \operatorname{Crit}(f)\git_+GL(V_{k+1}) \simeq Y_1, \,\,\,\,\, \operatorname{Crit}(f)\git_-GL(V_{k+1}) \simeq Y_2.
    \end{equation}
    
\end{lemma}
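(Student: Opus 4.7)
The plan is to compute $\operatorname{Crit}(f)$ directly as a $GL(2)$-invariant subscheme of $W$ and then intersect with the two semistable loci identified in the previous lemma. Writing $f(B,\omega) = \phi_a(B)\omega^a$, with $\phi$ extended to a $GL(2)$-equivariant polynomial on all of $W$ (so that $f$ is a well-defined regular function and is invariant with the correct character), the critical equations split as $\phi(B) = 0$ together with $\omega^a(\partial\phi_a/\partial B) = 0$.

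For the $+$ chamber, semistability forces $\operatorname{rk}(B)=2$, so $B$ defines a well-defined point $[B]\in G(2,V)$ and $\phi(B)$ agrees with the column-basis expression of $Q([B])$ on this locus. Hence $\phi(B)=0$ is equivalent to $Q([B])=0$, i.e.\ $[B]\in Y_2$. For generic $S$ the induced section of $\Uc(2)$ is regular, so its Jacobian has maximal rank along $\{\phi=0\}$ and the second critical equation forces $\omega=0$. Therefore $\operatorname{Crit}(f)\cap W^{ss}_+$ is the zero section of $\Uc^\vee(-2)$ over $Y_2$, and the GIT quotient by $GL(2)$ gives $Y_2$.

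For the $-$ chamber, I would exploit the parametrization from the previous lemma: every semistable orbit admits a representative with $\omega=(1,0)$, $[b_2]=[v]\in\PP(V)$, and $[b_1]=\eta\in\Qc(-2)_{[v]}$. The key step is to establish the polynomial identity
\begin{equation*}
    f(B,(1,0)) = \phi_1(B) = c\, S_{ijk}\, b_2^i\, b_1^j\, b_2^k = c\, P_r([v])\, b_1^r
\end{equation*}
for a nonzero scalar $c$, so that in the $X_-$ coordinates $f$ descends (up to a constant) to the natural pairing $P([v])\cdot\eta$ between $\Qc^\vee(2)$ and $\Qc(-2)$. This identity is obtained by a short expansion of the defining formulas for $P$, $Q$, and $\phi$ combined with the antisymmetry of $S_{ijk}\in\Sigma_{2,1}V^\vee$ in its last two indices, which collapses the various cross-terms. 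The critical locus equations in these coordinates then read $P([v])=0$ (i.e.\ $[v]\in Y_1$) together with $\eta=0$ (using regularity of $P$ for generic $S$), so $\operatorname{Crit}(f)\git_- GL(2)$ is the zero section of $\Qc(-2)$ over $Y_1$, which is $Y_1$.

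The main obstacle is precisely the polynomial identity $\phi_1 \propto P_r b_1^r$ in the $-$ chamber: this is what genuinely ties together the two different pushforwards $h_{1*}S$ and $h_{2*}S$ into a single superpotential and explains why variation of GIT on one and the same GLSM recovers both $Y_1$ and $Y_2$. Once this identity is established, the rest of the argument is the standard Kn\"orrer-type observation that the critical locus of a linear pairing between dual vector bundles, restricted to the total space of one of them, is the zero section over the vanishing locus of the defining section.
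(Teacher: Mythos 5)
Your proposal is correct, and it arrives at the same identifications as the paper, but it is organized differently. The paper disposes of the $+$ chamber by citing Okonek--Teleman and only computes in the $-$ chamber, working directly on $W\setminus Z_-$: from $\omega\neq 0$ and regularity of $\phi$ on the rank-two locus it first deduces $\operatorname{rk}B=1$ on the critical locus, then gauge-fixes $\omega=(1,0)$ and the first column of $B$, and only then differentiates, recognizing the surviving equations as $P^l([v])=0$. You instead treat the two chambers symmetrically: you prove the $+$ case by the same regular-section argument rather than by citation, and in the $-$ chamber you first descend $f$ to $X_-=\operatorname{tot}(\Qc(-2)\arw\PP(V))$ via the identity $f(B,(1,0))=\phi^1(B)=P_r([b_2])\,b_1^{\,r}$ (up to an irrelevant constant); this identity is correct --- it follows from the skewness of $S$ in its last two indices and is essentially the paper's final chain of equalities read before differentiating, with well-definedness of the pairing against $\eta=b_1 \bmod b_2$ guaranteed by $P_r([v])v^r=0$ from the preceding lemma --- and then you invoke the standard fact that the critical locus of such a pairing superpotential is the zero section over the zero locus of the regular section (your $\eta=0$ is exactly the paper's $\operatorname{rk}B=1$). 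The only step you leave implicit and should state is the compatibility of $\operatorname{Crit}$ with the quotient: since $GL(2)$ acts freely on $W^{ss}_\pm$ and the quotient maps $W^{ss}_\pm\arw X_\pm$ are smooth surjections, $df$ vanishes at a semistable point if and only if the differential of the descended superpotential vanishes at its image, so computing the critical locus of the descended pairings on $X_\pm$ really does compute $\operatorname{Crit}(f)\git_\pm GL(2)$. With that remark added, your argument is complete, and arguably more uniform than the paper's, at the modest cost of verifying that the descended superpotentials on $X_\pm$ are the tautological pairings with $h_{2*}S$ and $h_{1*}S$.
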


\begin{proof}
    The statement about $Y_2$ is simply a consequence of \cite[Remark 1.2]{okonekteleman}. To prove the second statement, let us begin by writing $\phi(B)$ explicitly. By the identity:
    
    $$
    B_\alpha^{\,\,\,\, r} \phi^\alpha([B]) = Q^r([B]) = S_{i_1\dots i_k j_1\dots j_{k+1}}B_{[1}^{\,\,\,\,i_1}\cdots B_{k}^{\,\,\,\,i_{k}}B_{k+1]}^{\,\,\,\,r} B_{[1}^{\,\,\,\,j_1}\cdots B_{k+1]}^{\,\,\,\,j_{k+1}}
    $$
    
    we deduce that, up to an overall sign independent of $r$:
    
    $$
    \phi^\alpha([B])  = S_{i_1\dots i_k j_1\dots j_{k+1}}\delta^\alpha_{[1}B_{2}^{\,\,\,\,i_1}\cdots B_{k+1]}^{\,\,\,\,i_{k}} B_{[1}^{\,\,\,\,j_1}\cdots B_{k+1]}^{\,\,\,\,j_{k+1}}.
    $$
    
    The superpotential is then given by:
    
    \begin{equation}
        f(B, \omega) = \omega_\alpha S_{i_1\dots i_k j_1\dots j_{k+1}}\delta^\alpha_{[1}B_{2}^{\,\,\,\,i_1}\cdots B_{k+1]}^{\,\,\,\,i_{k}} B_{[1}^{\,\,\,\,j_1}\cdots B_{k+1]}^{\,\,\,\,j_{k+1}}.
    \end{equation}
    
    The critical locus of $f$ is defined by the following equations:
    
    \begin{equation}
        \left\{\begin{array}{cc}
            S_{i_1\dots i_k j_1\dots j_{k+1}}\delta^\alpha_{[1}B_{2}^{\,\,\,\,i_1}\cdots B_{k+1]}^{\,\,\,\,i_{k}} B_{[1}^{\,\,\,\,j_1}\cdots B_{k+1]}^{\,\,\,\,j_{k+1}}  & = 0 \\
            & \\
            \omega_\alpha S_{i_1\dots i_k j_1\dots j_{k+1}}\frac{\partial}{\partial B_\beta^{\,\,\,\, l}}\delta^\alpha_{[1}B_{2}^{\,\,\,\,i_1}\cdots B_{k+1]}^{\,\,\,\,i_{k}} B_{[1}^{\,\,\,\,j_1}\cdots B_{k+1]}^{\,\,\,\,j_{k+1}} & = 0
        \end{array}
        \right.
    \end{equation}
    
    which we need to solve in $W\setminus Z_-$. Observe that, since $\omega\neq 0$, fulfilling the second equation is equivalent to imposing that $\omega$ lies in the kernel of the Jacobian of $\phi$. However, since $\phi$ is a regular section on the locus of $W$ where $B$ has maximal rank, this cannot be: therefore, the only way to fulfill the second equation in $W\setminus Z_-$ is to impose $\rk B = k$. Then, the first equation is automatically satisfied because of the skew-symmetry condition on the indices. Let us now turn our attention again on the second equation (which is a set of $(k+1)n$ equations). It can be remarkably simplified if we use the $GL(V_{k+1})$-action to impose the following conditions:
    
    \begin{itemize}
        \item[$\circ$] $\omega_1 \neq 0$; $\omega_2 = \dots = \omega_{k+1} = 0$.
        \item[$\circ$] the first column of $B$ lies in the orthogonal complement of the remaining $k$ columns.
    \end{itemize}
    
    Since $\ker\omega\cap\ker B = \{0\}$ and $\rk B = k$, this implies that the first column of $B$ is zero and the remaining ones are linearly independent. Most of the equations are therefore identically zero, except for the following $n$:
    
    \begin{equation}
        \begin{split}
            \omega_1 \phi^\alpha(B) & = \omega_1 S_{i_1\dots i_k j_1\dots j_{k+1}}\frac{\partial}{\partial B_1^{\,\,\,\, l}}B_{[2}^{\,\,\,\,i_1}\cdots B_{k+1]}^{\,\,\,\,i_{k}} B_{[1}^{\,\,\,\,j_1}\cdots B_{k+1]}^{\,\,\,\,j_{k+1}}\\
            & = \omega_1 S_{i_1\dots i_k j_1\dots j_{k+1}}B_{[2}^{\,\,\,\,i_1}\cdots B_{k+1]}^{\,\,\,\,i_{k}} \delta^{j_1}_{[l} B_{2}^{\,\,\,\,j_2}\cdots B_{k+1]}^{\,\,\,\,j_{k+1}} = P_l(A)
        \end{split}
    \end{equation}

    where the last equality follows by calling $A\in\Hom(V_k, V_n)$ the matrix obtained by erasing the first column from $B$. We still have to quotient by the residual group action: the stabilizer is in fact $GL(V_k)\times \CC^*$, where $\CC^*$ acts trivially on $A$ and by multiplication on $\omega_1$, while $GL(V_k)$ acts via the fundamental representation on $V_k$ and trivially on $\omega_1$. Quotienting by this action we obtain the zero locus of the five quadrics $P^l$ inside $G(k, V_n)$, which is isomorphic to $Y_2$.
\end{proof}

\subsection{Window categories}\label{subsection:windows}

To fix the notation we will briefly review the representation algebra and the Borel--Weil--Bott theorem for Grassmannians of type $A$. For a more detailed exposition on this formulation of the Borel--Weil--Bott theorem, see \cite[Appendix A]{bpc}.

Fix a vector space $V_{ {n}}\simeq \CC^n$. Given a rank $r$ vector bundle $\Ec$ and a sequence $\lambda = (\lambda_1,\dots, \lambda_r)$ of non-increasing integers, we denote by $\Sc_\lambda\Ec$ the image of $\Ec$ through the Schur functor of $GL(r)$ associated to $\lambda$. For example, one has $\wedge^k\Ec = \Sc_{(1^k, 0^{r-k})}\Ec$ and $\Sym^k\Ec = \Sc_{(k, 0^{r-1})}\Ec$.

Every completely reducible homogeneous vector bundle on $G(k, V)$ can be written as sum of terms of the form $\Sc_\lambda\Uc^\vee\otimes \Sc_\delta\Qc^\vee$, where $\Uc$ and $\Qc$ are the tautological and quotient vector bundles of the sequence \ref{eq:tautologicalseq}. We associate to $\Sc_\lambda\Uc^\vee\otimes \Sc_\delta\Qc^\vee$ a double sequence of integers $(\lambda|\delta):=(\lambda_1, \dots, \lambda_k  {|} \delta_1\dots, \delta_{n-k})$. In this language, the Borel--Weil--Bott theorem can be rewritten as:

\begin{theorem}[Borel--Weil--Bott for Grassmannians of type $A$]\label{thm:borelweilbott}
    Let $V_{ {n}}$ be a vector space of dimension $n$ and let $\lambda$ and $\delta$ be non-increasing sequences of integers of lengths respectively $k$ and $n-k$. Fix $\rho = (n-1, n-2,\dots,1,0)$. Then one and only one of the following possibilities occur:
    
    \begin{enumerate}
    
        \item[$\circ$] The sequence $(\lambda|\delta)+\rho$ contains a repeated number. Then $\Sc_\lambda\Uc^\vee\otimes \Sc_\delta\Qc^\vee$  {has no cohomology}.
         
        \item[$\circ$] The sequence $(\lambda|\delta)+\rho$ contains no repeated numbers. Then there exists a permutation $P$ of minimal length $l$ such that $P((\lambda|\delta)+\rho)$ is non decreasing, and one has $H^\bullet(G(k, V_{ {n}}), \Sc_\lambda\Uc^\vee\otimes \Sc_\delta\Qc^\vee) = \Sc_{P((\lambda|\delta)+\rho)-\rho}V_{ {n}}[-l]$ where $\Sc_{P((\lambda|\delta)+\rho)-\rho}V_{ {n}}$ is the representation of $GL(V_{ {n}})$ of highest weight $P((\lambda|\delta)+\rho)-\rho$.
    \end{enumerate}
\end{theorem}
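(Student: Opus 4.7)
The plan is to deduce this combinatorial statement from the classical Borel--Weil--Bott theorem on the complete flag variety $GL(V)/B$, pushed forward along the projection $\pi\colon GL(V)/B\arw G(k,V) = GL(V)/P_k$, where $P_k$ is the maximal parabolic stabilizing a $k$-dimensional subspace.

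First I would recall the correspondence between irreducible homogeneous vector bundles on $G(k,V)$ and irreducible representations of the Levi factor $GL(k)\times GL(n-k)$: such representations are parametrized exactly by pairs $(\lambda,\delta)$ of non-increasing integer sequences of lengths $k$ and $n-k$, and the associated bundle is $\Sc_\lambda\Uc^\vee\otimes\Sc_\delta\Qc^\vee$, corresponding to the Levi-dominant weight $(\lambda|\delta)$ of the maximal torus of $GL(V)$. Under this correspondence, induction of representations from $P_k$ to $B$ together with the Leray spectral sequence reduces the computation of $H^\bullet(G(k,V), \Sc_\lambda\Uc^\vee\otimes\Sc_\delta\Qc^\vee)$ to the cohomology of the corresponding line bundle $\Lc_{(\lambda|\delta)}$ on $GL(V)/B$.

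Second, I would invoke the general BWB theorem on $GL(V)/B$: the cohomology $H^\bullet(GL(V)/B, \Lc_\mu)$ vanishes whenever the weight $\mu+\rho$ is singular with respect to the Weyl group $S_n$ --- equivalently, when it has a repeated entry, since $S_n$ acts on weights by permutation of coordinates --- and otherwise is concentrated in a single degree $\ell(w)$, equal to the number of inversions of the unique Weyl group element $w$ sending $\mu+\rho$ into the chosen fundamental chamber. In that case, the cohomology is identified with the irreducible $GL(V)$-representation of highest weight $w(\mu+\rho)-\rho$. Matching this to the statement, the permutation $P$ is exactly $w$, its length $l$ is the Coxeter length (the number of inversions), and the irreducible representation appearing in the statement is $\Sc_{P((\lambda|\delta)+\rho)-\rho}V$.

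The main technical content is the general Borel--Weil--Bott theorem itself, which we would simply cite. A self-contained proof would proceed either by induction on Coxeter length, building the full flag variety via a tower of $\PP^1$-bundles and combining Leray spectral sequences with the elementary cohomology of $\Oc(d)$ on $\PP^1$, or algebraically via the Weyl character formula. The only real subtlety in the translation is the convention for which Weyl chamber is "dominant": the statement uses non-decreasing order, which is the anti-dominant convention once one fixes $\rho = (n-1,\ldots,1,0)$ and positive roots $e_i - e_j$ for $i<j$. This choice must be enforced consistently, and can be verified on a toy example such as $\Oc(d)$ on $\PP^{n-1}$.
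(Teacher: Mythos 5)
The paper offers no proof of this statement: it is recalled as the classical Borel--Weil--Bott theorem, specialized to $G(k,V)$, with a pointer to \cite[Appendix A]{bpc} for the detailed formulation. Your sketch --- identifying $\Sc_\lambda\Uc^\vee\otimes\Sc_\delta\Qc^\vee$ with the homogeneous bundle of Levi-dominant weight $(\lambda|\delta)$, pushing the computation up to $GL(V)/B$ via the relative Borel--Weil theorem and Leray, and then invoking the full-flag Borel--Weil--Bott theorem with the dotted $S_n$-action --- is exactly the standard derivation that those references carry out, and it is correct in outline; your explicit flagging of the dominant versus anti-dominant convention is the right thing to worry about, since the paper's phrasing in terms of sorting $(\lambda|\delta)+\rho$ into non-decreasing order encodes precisely such a convention choice and must be checked against a test case as you propose.
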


To each double weight $(\lambda|\delta)$ we can associate a double Young Tableau. For instance, on $G(4, 6)$ one has:

\begin{equation}
    (3,2,2,1 | 2,0) \rightarrow {\tiny \frac{\ydiagram{3,2,2,1}}{\ydiagram{2,0}\hspace{9pt}}}.
\end{equation}

By Theorem \ref{thm:borelweilbott} and the Littlewood--Richardson formula, we are able to compute cohomology for all completely reducible homogeneous vector bundles on Grassmannians. Note that the Littlewood--Richardson formula must be applied to the components above and below the bar \emph{separately}, as illustrated by the following example on $G(2,  {V_n})$ for $n>1$:

\begin{equation}
    \begin{split}
    \tiny
        \frac{\ydiagram{2,0}}{\hspace{-10pt}\ydiagram{1,0}}\otimes \frac{\ydiagram{1,1}}{\ydiagram{1,0}} = \frac{\ydiagram{3,1}}{\hspace{-10pt}\ydiagram{2,0}}\oplus \frac{\ydiagram{3,1}}{\hspace{-18pt}\ydiagram{1,1}}.
    \end{split}
\end{equation}

We introduce the following notation:

\begin{equation}\label{eqn:box}
    \operatorname{Box}(n-k, k) = \left\{ \lambda\in\ZZ^{n-k} ~ : ~ k\geq\lambda_1\geq\cdots\geq\lambda_{n-k} \right\}.
\end{equation}

Let us recall the following definition:

\begin{definition}(cf. \cite[Section 7]{hillevandenbergh}) 
    Let $X$ be a quasi-projective variety and $T$ a finite rank vector bundle. We say $T$ is \emph{partially tilting} if $\Ext_{X}^{>0}(T,T) = 0$. Moreover, we say $T$ is \emph{tilting} if it is partially tilting and it generates the unbounded category of quasi-coherent sheaves $D(\operatorname{Qcoh}X)$ (i.e. for any object $E\in D(\operatorname{Qcoh}X)$, $R\Hom(E, T) = 0$ implies $E = 0$).
\end{definition}

It is well-known that  {the direct sum $T_K$ of the bundles appearing in Kapranov's full exceptional collection \cite{kapranov}} is tilting on $G(k, V_n)$. However, such property is in general not preserved by pullback to the total space of a vector bundle, and it can be proven to not fit to our construction, therefore we construct a different candidate:

\begin{definition}
    \label{def:mutated_bundle}
    We call \emph{mutated Kapranov's bundle} the vector bundle $T_{M}$ on $G(n-1, V)$ given by:
    
    \begin{equation}
        T_{M}:= \bigoplus_{\lambda\in\operatorname{Box}(n-3, 1)}\Sc_\lambda  {\Uc^\vee}\oplus\Oc(1)\oplus\Oc(2)
    \end{equation}
    
\end{definition}

Observe that a weight $\omega$ (and its associated representation) defines not only a vector bundle on $ {G(V_{n-2}, V_n)}$, $ {G(V_{n-1}, V_n)}$ or the total spaces $X_\pm$ via pullback, but also on the space $\Xc$. We will call $E_\omega$ such object on $\Xc$. This motivates the following:

\begin{definition}\label{def:mutated_window}
    Define the set $I_W:=\left\{\operatorname{Box}(n-3, 1)
        \cup\{(1, \dots, 1); (2, \dots, 2)\}\right\}$.
    We call \emph{mutated window} the subcategory $\Wc_0\subset \dbcoh(\Xc)$ generated by the objects $\left\{E_{(\lambda|0)} : \lambda\in I_W\right\}$.\\
    \\
    Moreover, we define:
    
    \begin{equation}
        \Tc:= \bigoplus_{\lambda\in I_W} E_{(\lambda|0)}
    \end{equation}
    
\end{definition}

\begin{remark}
    The reason of these names in Definitions \ref{def:mutated_bundle} and \ref{def:mutated_window} is to recall the fact that while Kapranov's bundle is the direct sum of the objects appearing in Kapranov's full exceptional collection:
    
    \begin{equation}
        \dbcoh(G(n-1, n)) = \langle\Oc, \Uc^\vee,\dots, \wedge^{n-2}\Uc^\vee, \Oc(1)\rangle,
    \end{equation}
    
    the mutated Kapranov bundle is the direct sum of the objects appearing in the following collection:
    
    \begin{equation}\label{eq:mutated_collection}
        \dbcoh(G(n-1, n)) = \langle\Oc, \Uc^\vee,\dots, \wedge^{n-3}\Uc^\vee, \Oc(1), \Oc(2)\rangle.
    \end{equation}
    
    The latter can be obtained by the former, mutating $\wedge^{n-2}\Uc^\vee$ one step to the right: in fact, $\wedge^{n-2}\Uc^\vee\simeq\Uc(1)$, and by the tautological sequence $\RR_{\Oc(1)}\Uc(1) \simeq \Qc(1) \simeq \Oc(2)$ up to an irrelevant shift. This also tells that Equation \ref{eq:mutated_collection} is indeed a full exceptional collection.
\end{remark}

\begin{proposition}\label{prop:equivalence+_no_potential}
    The functor $i_{ {+}}|_{\Wc_0}^*:\Wc_0\arw\dbcoh(X_{ {+}})$ is an equivalence of categories.
\end{proposition}

\begin{proof}
    We will need a vanishing result (Lemma \ref{lem:partiallytilting+}), which, in order to keep the proof readable, will be discussed after it. Let us first prove that $i_+|_{\Wc_0}^*$ is fully faithful: this amounts to show that for every $\lambda, \lambda'\in I_W$:
    
    \begin{equation}\label{eq:ext_equality+}
        \Ext^\bullet_\Xc (E_{(\lambda|0)}, E_{(\lambda'|0)}) = \Ext^\bullet_{X_+} (i_+|_{\Wc_0}^*E_{(\lambda|0)}, i_+|_{\Wc_0}^*E_{(\lambda'|0)}).
    \end{equation}
    
    First, observe that $i_+$ is an isomorphism outside a locus of codimension at least two: hence, by Hartogs's lemma, it preserves global sections of sheaves, and therefore  Equation \ref{eq:ext_equality+} holds in degree zero. This reduces the proof of fully faithfulness to showing that both sides of Equation \ref{eq:ext_equality+} are zero in higher degrees, i.e. that there are no higher extensions between direct summands of both $\Tc$ and $\pi_+^*T$, which can be rephrased saying that both bundles are partially tilting. Since $\Xc$ is a quotient algebraic stack by a linear reductive group, \cite[Lemma 2.2.8]{ballardfaverokatzarkov} holds. In particular there are no higher Exts between any two vector bundles in $\dbcoh\Xc$, which implies that $\Tc$, being a direct sum of vector bundles, is partially tilting. On the other hand, by Lemma \ref{lem:partiallytilting+} the bundles $i_+|_{\Wc_0}^* \Tc=\pi_+^*T$ is partially tilting as well.\\
    \\
    Let us now prove that $i_+|_{\Wc_0}^*$ is essentially surjective. In the following, given a sheaf $F$ on $X_+$, we will call a \emph{lift} of $F$ an object $\Fc$ on $\Xc$ such that $\Fc$ restricts to $F$.\\
    Since $i^*_+|_{\Wc_0}:\Wc_0\arw\dbcoh(X_+)$ is fully faithful, one has an equivalence of categories $\Wc_0\arw\Ic m$, where $\Ic m$ is the essential image of $i^*_+|_{\Wc_0}$. Moreover, since $i^*_+|_{\Wc_0}$ is exact, and generators of $D^b(X_+)$ are contained in its essential image it follows that $i^*_+|_{\Wc_0}$ is essentially surjective.

    More precisely, by definition of $\Wc_0$, all direct summands of $\pi_+^*T$ are objects of $\Ic m$. On the other hand, these summands generate the whole category $\dbcoh(X_+)$. Hence, to conclude the proof we just need to show that also cones, shifts and direct sums of objects that can be lifted to $\Wc_0$ can also be lifted to objects of $\Wc_0$. Taking shifts and direct sums commutes with $i^*_+|_{\Wc_0}$ and thus shifts and direct sums lift to shift and direct sums of their lifts. Consider now a morphism $f:i^*_+A\arw i^*_+B$, with $A,B$ objects from $\Wc_0$. Since $i^*_+|_{\Wc_0}$ is fully faithful, there is a map $\wt f: A\arw B$
    such that $i^*_+|_{\Wc_0} \wt f=f$, and we simply define the lift of $\operatorname{Cone}(f)$ to be $\operatorname{Cone}(\wt f)$. This is indeed a lift since it comes with a distinguished triangle
    
    \begin{equation}
        A\arw B\arw\operatorname{Cone}(\wt f)\arw A[1]
    \end{equation}
    
    which proves that $\operatorname{Cone}(\wt f)\in\Wc_0$ and by exactness of $i^*_+$ we have $i^*_+|_{\Wc_0}(\operatorname{Cone}(\wt f))= \operatorname{Cone}(f)$.

\end{proof} 

\begin{lemma}\label{lem:partiallytilting+}
    The bundle $\pi_{ {+}}^*T$ is partially tilting, where $\pi_{ {+}}:X_{ {+}}\arw  {G(V_{n-1}, V_n)}$ is the vector bundle projection.
\end{lemma}

\begin{proof}
    One has:

    \begin{equation}\label{eq:extsleftside1}
        \begin{split}
            \Ext^{\bullet}_{X_{ {+}}}(\pi_{ {+}}^*\Sc_\lambda {\Uc^\vee}, \pi_{ {+}}^*\Sc_{\lambda'} {\Uc^\vee}) & = \Ext^{\bullet}_{ {G(V_{n-1}, V_n)}}(\Sc_\lambda {\Uc^\vee}, \pi_{{ {+}}*}(\pi_{ {+}}^*\Sc_{\lambda'} {\Uc^\vee}\otimes\Oc_{X_{ {+}}}))\\
            & = \Ext^{\bullet}_{ {G(V_{n-1}, V_n)}}(\Sc_\lambda {\Uc^\vee}, \Sc_{\lambda'} {\Uc^\vee}\otimes\pi_{{ {+}}*}\Oc_{X_{ {+}}})\\
            & = \Ext^{\bullet}_{ {G(V_{n-1}, V_n)}}(\Sc_\lambda {\Uc^\vee}, \Sc_{\lambda'} {\Uc^\vee}\otimes\bigoplus_{m\geq 0}\operatorname{Sym}^{ {m}}( {\Uc}(2)))\\
            & = \bigoplus_{m\geq 0}H^\bullet( {G(V_{n-1}, V_n)}, (\Sc_\lambda {\Uc^\vee})^\vee\otimes\Sc_{\lambda'} {\Uc^\vee}\otimes\operatorname{Sym}^{ {m}}( {\Uc}(2))).
        \end{split}
    \end{equation}
    
    Observe now that if $\lambda = (\lambda_1, \dots\lambda_{n-1})$ one has 
    
    \begin{equation}
        (\Sc_\lambda {\Uc^\vee})^\vee = \Sc_\lambda {\Uc^\vee} = \Sc_{\bar\lambda} {\Uc^\vee}( {-}\lambda_1) 
    \end{equation}
    
    where we defined $\bar\lambda :=(-\lambda_{n-1}+\lambda_1, -\lambda_{n-2}+\lambda_1, \dots, -\lambda_2+\lambda_1, 0)$. This allows us to rewrite Equation \ref{eq:extsleftside1} as:
    
    \begin{equation}\label{eq:extsleftside2}
        \begin{split}
            \Ext^{\bullet}_{X_{ {+}}}(\pi_{ {+}}^*\Sc_\lambda {\Uc^\vee}, \pi_{ {+}}^*\Sc_{\lambda'} {\Uc^\vee}) & =
            \bigoplus_{m\geq 0}H^\bullet( {G(V_{n-1}, V_n)}, \Sc_{\bar\lambda} {\Uc^\vee}\otimes\Sc_{\lambda'} {\Uc^\vee}\otimes\operatorname{Sym}^m {\Uc}(2m {-}\lambda_1))
        \end{split}
    \end{equation}

    We will first consider the situation when $\lambda' = (2,\dots,2)$ which corresponds to computing:
    
    \begin{equation}
        \begin{split}
            \Ext^{\bullet}_{X_{+}}(\pi_{+}^*\Sc_\lambda\Uc^\vee, \pi_{+}^*\Oc(2)) & =
            \bigoplus_{m\geq 0}H^\bullet(G(V_{n-1}, V_n), \Sc_{\bar\lambda}\Uc^\vee\otimes\operatorname{Sym}^m\Uc(2m-\lambda_1+2))
        \end{split}
    \end{equation}
    
    The condition $\lambda_i\leq 2$ for every $i$ implies $\bar\lambda_i\leq 2$, therefore the argument of the right hand side is a direct sum of products of irreducible, globally generated homogeneous vector bundles on $G(n-1, V_n)$, which cannot have higher cohomology.\\
    
    Let us now check the case $\lambda_i, \lambda_i'\leq 1$ for every $i$. The condition $\lambda_i\leq 1$ for every $i$ implies $\bar\lambda_i\leq 1$. Note also that  {$\operatorname{Sym}^m\Uc(k) = \Sc_{(m,m,\dots,m,0)}\Uc^\vee\otimes\Oc(k-m)$}. We can recast Equation \ref{eq:extsleftside2} once again, finding: 
    
    \begin{equation}\label{eq:extsleftside3}
        \begin{split}
            \Ext^{\bullet}_{X_{ {+}}}(\pi_{ {+}}^*\Sc_\lambda {\Uc^\vee}, \pi_{ {+}}^*\Sc_{\lambda'} {\Uc^\vee}) & =
            \bigoplus_{m\geq 0}H^\bullet( {G(V_{n-1}, V_n)}, \Sc_{\delta} {\Uc^\vee}( {m -}\lambda_1))
        \end{split}
    \end{equation}
    
    where $\delta$ is a weight given by the product of two weights $\bar\lambda, \lambda'$  {with all entries lesser or equal than one}, together with a third weight  {$(m, m, \dots, m, 0)$} and the proof reduces to showing that each summand in the right hand side of \ref{eq:extsleftside3} has no higher cohomology.

    Let us start by proving the statement for $m=0$. The product $\Sc_{\bar\lambda}\Uc^\vee\otimes \Sc_{\lambda'}\Uc^\vee(\lambda_1)$ of Equation \ref{eq:extsleftside2} can be expanded by applying the Littlewoods-Richardson formula to diagrams of the following shape:
    
    \begin{equation}\label{eq:young_diagram_vanishing_lemma}
        \begin{split}
            \tiny
            \frac{\ydiagram{0}}{\underbrace{\ydiagram{1}}_{\lambda_1}}\otimes
            \frac{\ydiagram{1,1,1,1,0}}{\ydiagram{0}}\otimes
            \frac{\ydiagram{1,1,1,0,0}}{\ydiagram{0}}
        \end{split}
    \end{equation}
    
    The result is a sum of diagrams with weights of the form $(\gamma_1,\dots\gamma_{n-1}|\lambda_1)$ where $\gamma_i\leq 1+\lambda_1$. Two possibilities can occur:
    \begin{enumerate}
        \item[$\circ$] $\lambda_1 = 0$. Let us apply Theorem \ref{thm:borelweilbott}: since $\gamma_1\geq\dots\geq\gamma_{n-1}\geq\lambda_1 = 0$ the cohomology of the associated bundle is concentrated in degree zero.

        \item[$\circ$] $\lambda_1 = 1$. If $\gamma_{n-1}\geq 1$ the cohomology is concentrated in degree zero as above, while if $\gamma_{n-1} = 0$, in the notation of Theorem \ref{thm:borelweilbott}, once we add $\rho$ to the weight there is a repetition between the last two entries and hence the cohomology is zero in all degrees.
    \end{enumerate}
    
    The case $m>0$ can be treated by multiplying to the decomposition of Equation \ref{eq:young_diagram_vanishing_lemma} the following diagram:
     
    \begin{equation}\label{eq:young_diagram_symmetric_power}
        {\tiny
        \frac{\overbrace{\ydiagram{8,8,8,8,8,4}}^{2m}}
        {\ydiagram{0}}
        }
    \end{equation}
    In every summand of the decomposition of Equation \ref{eq:young_diagram_vanishing_lemma}, the length of the row above the bar is either equal to the one below or shorter by one box. Therefore, by applying the Littlewood--Richardson formula, multiplying with the diagram \ref{eq:young_diagram_symmetric_power} for any positive $m$ gives a non-decreasing weight: hence, by Theorem \ref{thm:borelweilbott}, no cohomology can occur in degree higher than zero.\\
    \\
    The remaining case to consider is the one where $\lambda = (2, \dots, 2)$ and $\lambda' = (1, \dots, 1, 0, \dots, 0)$, i.e.
    
    \begin{equation}
        \begin{split}
            \Ext^{\bullet}_{X_{+}}(\pi_{+}^*\Oc(2), \pi_{+}^*\Sc_{\lambda'}\Uc^\vee) & =
            \bigoplus_{m\geq 0}H^\bullet(G(V_{n-1}, V_n), \Sc_{\lambda'}\Uc^\vee\otimes\operatorname{Sym}^m\Uc(2m-2))
        \end{split}
    \end{equation}
    
    Each summand has an associated diagram of one of the following shapes (here illustrated for $n=7$):
    
    \begin{equation}
        \begin{split}
            \tiny
            \frac{\hspace{-10pt}\ydiagram{1,1,1,1,0,0}}{\ydiagram{2}}\otimes\frac{\overbrace{\ydiagram{8,8,8,8,8,4}}^{2m}}
            {\ydiagram{0}}\hspace{3pt}; \hspace{25
            pt}
            \tiny
            \frac{\hspace{-10pt}\ydiagram{1,1,1,1,1,1}}{\ydiagram{2}}\otimes\frac{\overbrace{\ydiagram{8,8,8,8,8,4}}^{2m}}
            {\ydiagram{0}}
        \end{split}
    \end{equation}
    
    As above, for every value of $m\geq 0$ we obtain either sections or a repetition.
    
\end{proof}

\subsubsection{Vector bundles and variation of GIT}

The next step of our construction of a derived embedding $\dbcoh(X_+)\subset\dbcoh(X_-)$ is to prove the following statement:

\begin{proposition}
    The functor $i_-|_{\Wc_0}^*:\Wc_0\arw\dbcoh(X_-)$ is fully faithful.
\end{proposition}

As in the proof of Proposition \ref{prop:equivalence+_no_potential}, we just need to show that $i_-^*\Tc$ is partially tilting:

\begin{proposition}\label{prop:partiallytilting-}
    The vector bundle $i_-^*\Tc$ is isomorphic to $\vgit(\pi_+^* T)$, and it is partially tilting, i.e one has:
     
    \begin{equation}
        \Ext^{>0}_{X_-}(\vgit(\pi_+^* T), \vgit(\pi_+^* T)) = 0
    \end{equation}
    
\end{proposition}

\begin{proof}
    This is an immediate consequence of the discussion below and the vanishings of Lemma \ref{lem:difficultvanishing1} and Lemma \ref{lem:difficultvanishing2} following it.
\end{proof}

First, the bundle $E_{(\lambda|0)}$ can be described as follows:

\begin{equation}\label{eq:bundles_on_stack}
    E_{(\lambda|0)} = (W\oplus V_\lambda)/GL(V_{n-1})
\end{equation}

where $W$ is the vector space defined in Lemma \ref{lem:git_quotients} and $V_\lambda$ is a vector space on which $GL(V_{n-1})$ acts via the representation of weight $\lambda$. Clearly, by construction, $i_+^*E_{(\lambda|0)}$ is the pullback from $G(n-1, V_n)$ of $\Sc_\lambda\Uc^\vee$, i.e. the following:

\begin{equation}
    \pi_+^*\Sc_\lambda\Uc^\vee = \left(W\setminus Z_+\times V_\lambda\right) / GL(V_{n-1})
\end{equation}

By Equation \ref{eq:bundles_on_stack} we see that $i_-^*E_{(\lambda|0)}$ will be the bundle described as the quotient by the same $GL(V_{n-1})$-action, but restricted to $W\setminus Z_-\times V_\lambda$. This bundle is nothing but the image of $\pi_+^*\Sc_\lambda\Uc^\vee$ under the variation of GIT described in Definition \ref{def:vgit_bundles_general}, hence one has:

\begin{equation}\label{eq:bundle_general_vgit}
    i_-^*E_{(\lambda|0)} = \vgit(\pi_+^*\Sc_\lambda\Uc^\vee) = \left(W\setminus Z_-\times V_\lambda\right) / GL(V_{n-1})
\end{equation}

By our definition of $\Wc_0$, we are mainly interested in describing $\vgit(\pi_+^*\wedge^p\Uc^\vee)$ and $\vgit(\pi_+^*\Oc(m))$ as explicitly as possible. Let us start with the former: by the discussion of Section \ref{subsection:glsm}, we can reduce the $GL(V_{n-1})$-action on $W\setminus Z_-$ to the action of a subgroup $\operatorname{Stab}\subset GL(V_{n-1})$ on $\wt W := \{(v, C)\in V_n\oplus\Hom(V_{n-2}, V_n) : \rk C = n-2\}$. If we apply the same procedure to $\vgit(\pi_+^*\wedge^p\Uc^\vee)$ we find:

\begin{equation}\label{eq:vgit_tautological_dual}
    \vgit(\pi_+^*\wedge^p\Uc^\vee) = \left(\wt W \times\Hom(\wedge^{p}V_{n-1}, \CC) \right) / \operatorname{Stab}
\end{equation}

where the action of $g\in\operatorname{Stab}$ on the second factor is by precomposition (i.e. multiplication from the right) with the matrix $\wedge^{p}g^{-1}$ of minors of $g$ with order $p$. Given the decomposition of $g^{-1}\in\operatorname{Stab}$ as in Equation \ref{eq:stab}, the $\operatorname{Stab}$-action on $(v, C, w)\in\wt W\times \Hom(\wedge^{p}V_{n-1}, \CC)$ is:

\begin{equation}
    g, (v, C, w) \longmapsto (v\det h^2 + C \kappa, Ch^{-1}, w\wedge^p g^{-1}).
\end{equation}

By Equation \ref{eq:stab} we can deduce the following decomposition:

\begin{equation}
    \label{eq:wedgestab}
    \wedge^p g^{-1} = \left(\begin{matrix}
    \det h^2\wedge^{p-1}h^{-1} & 0 \\
    P(\kappa, h^{-1}) & \wedge^p h^{-1}
    \end{matrix}\right)\in GL(\wedge^p V_{n-1})
\end{equation}

where $P(\kappa, h^{-1})$ is the matrix consisting of those minors of order $p$ $(\kappa|h^{-1})$ which depend on the entries of $\kappa$. Let us now consider the following bundles, where $GL(V_{n-2})\subset\operatorname{Stab}$ acts on $V_{n-2}$ via the fundamental representation:

\begin{equation}
    \pi_-^*\wedge^{p}\wt\Uc^\vee = \left(\wt W \times\Hom(\wedge^{p}V_{n-2}, \CC) \right) / \operatorname{Stab}
\end{equation}

\begin{equation}
    \pi_-^*\wedge^{p-1}\wt\Uc^\vee(-2) = \left(\wt W \times\Hom(\wedge^{p-1}V_{n-2}, \Sym^2(\wedge^{n-2}V_{n-2}^\vee)) \right) / \operatorname{Stab}
\end{equation}

where we recall that $\wt\Uc$ is the tautological bundle of $G(n-2, V_n)$. We will now show that $\vgit(\pi_+^*\wedge^{p}\Uc^\vee)$ is an extension of these bundles above.\\
Given $v = (v_1, \dots, v_{\binom{n-2}{p-1}}, 0,\dots, 0):=(w, 0)\in\wedge^p V_{n-1}^\vee$, the action by right-multiplication of $\wedge^p g^{-1}$ will be $(w, 0)\longmapsto(w\det h^2\wedge^{p-1} h^{-1}, 0)$. If we see $w$ as an element of $\wedge^p V_{n-2}\otimes\Sym^2\wedge^{n-2}V_{n-2}$, this gives an equivariant embedding of vector spaces $\wedge^p V_{n-2}\otimes\Sym^2\wedge^{n-2}V_{n-2}\xhookrightarrow{\,\,\,\,\,\,\,}\wedge^p V_{n-1}^\vee$ which defines an embedding of vector bundles $\pi_-^*\wedge^{p-1}\wt\Uc^\vee(-2)\xhookrightarrow{\,\,\,\,\,\,\,}\vgit(\pi_+^*\wedge^p\Uc^\vee)$ on $X_-$. By the same kind of argument we see that $\vgit(\pi_+^*\wedge^p\Uc^\vee)$ surjects on the cokernel $\pi_-^*\wedge^{p}\wt\Uc^\vee$, yielding the following short exact sequence of vector bundles on $X_-$:

\begin{equation}\label{eq:vgitsequence}
    0\arw\pi_-^*\wedge^{p-1}\wt\Uc^\vee(-2)\arw\vgit(\pi_+^*\wedge^{p}\Uc^\vee)\arw\pi_-^*\wedge^{p}\wt\Uc^\vee\arw 0
\end{equation}

On the other hand, since $\pi_+^*\Oc(m) = \pi_+^*\Sym^m\wedge^{n-1}\Uc^\vee$, by recasting Equation \ref{eq:bundle_general_vgit} in terms of a quotient by $\operatorname{Stab}$, with the appropriate $\lambda$, we obtain:

\begin{equation}\label{eq:vgit_line_bundle}
    \vgit(\pi_+^*\Oc(m)) = \left(\wt W \times\Sym^m\wedge^{n-1}V_{n-1}^\vee \right) / \operatorname{Stab}.
\end{equation}

The action of $g\in\operatorname{Stab}$ in this case is: 

\begin{equation}
    g, (v, C, w) \longmapsto (v\det h^2 + C \kappa, Ch^{-1}, w\det g^{-m}),
\end{equation}

and by $\det g = \det h^{-1}$ this allows to conclude that $\vgit(\pi_+^*\Oc(m)) = \pi_-^*\Oc(-m)$.\\
\\
In the following Lemmas \ref{lem:difficultvanishing1} and \ref{lem:difficultvanishing2} we will need to prove some vanishing results about cohomology of vector bundles on $X_-$. We will extensively make use of the following sequence for $1\leq p\leq q\leq n-1$:

\begin{equation}\label{eq:sequencewithH1}
    \begin{split}
        0\arw \pi_-^*(\wedge^{n-p-2}\wt\Uc^\vee\otimes\wedge^{q-1}\wt\Uc^\vee(-3))
        \arw
        (\vgit(\pi_+^*\wedge^p\Uc^\vee))^\vee\otimes\pi_-^*\wedge^{q-1}\wt\Uc^\vee(-2)
        \arw \hspace{50pt} \\
        \arw \pi_-^*(\wedge^{n-p-1}\wt\Uc^\vee\otimes\wedge^{q-1}\wt\Uc^\vee(-1)) \arw 0
    \end{split}
\end{equation}

which is the tensor product of the dual of the sequence \ref{eq:vgitsequence} with $\pi_-^*\wedge^{q-1}\wt\Uc^\vee(-2)$. Let us begin with a technical statement, which will be crucial in the proof of the vanishings.

\begin{lemma}\label{lem:lifesavinglemma}

    Consider the sequence \ref{eq:sequencewithH1} and suppose the following conditions hold:

    \begin{itemize}
        \item[$\circ$] $U_{(0)}:=H^0_{(0)}(X_-, \pi_-^*(\wedge^{n-p-1}\wt\Uc^\vee\otimes\wedge^{q-1}\wt\Uc^\vee(-1))) \simeq H^1(X_-, \pi_-^*(\wedge^{n-p-2}\wt\Uc^\vee\otimes\wedge^{q-1}\wt\Uc^\vee(-3)))$
        \item[$\circ$] $H^{>0}(X_-, \pi_-^*(\wedge^{n-p-1}\wt\Uc^\vee\otimes\wedge^{q-1}\wt\Uc^\vee(-1))) = 0$
        
        \item[$\circ$] $H^{>1}(X_-, \pi_-^*(\wedge^{n-p-2}\wt\Uc^\vee\otimes\wedge^{q-1}\wt\Uc^\vee(-3))) = 0$
    \end{itemize}

    where the left hand side of the first equation denotes the space of sections which are constant on the fibers of $\pi_-$. Then $H^1(X_-, (\vgit(\pi_+^*\wedge^p\Uc^\vee))^\vee\otimes\pi_-^*\wedge^{q-1}\wt\Uc^\vee(-2)) = 0$.
    
\end{lemma}

\begin{proof}

    By the conditions above, the long exact sequence of cohomology associated to the sequence \ref{eq:sequencewithH1} reads:
    
    \begin{equation}
        \begin{split}
            0\arw H^0(\wedge^{n-p-2}\wt\Uc^\vee\otimes\wedge^{q-1}\wt\Uc^\vee(-3)) \arw 
            H^0((\vgit(\pi_+^*\wedge^p\Uc^\vee))^\vee\otimes\wedge^{q-1}\wt\Uc^\vee(-2)) \xrightarrow{\,\,\beta_0\,\,} \hspace{50pt}\\
            H^0(\wedge^{n-p-1}\wt\Uc^\vee\otimes\wedge^{q-1}\wt\Uc^\vee(-1))\arw
            U_{(0)} \arw
            H^1((\vgit(\pi_+^*\wedge^p\Uc^\vee))^\vee\otimes\wedge^{q-1}\wt\Uc^\vee(-2)) \arw 0
        \end{split}
    \end{equation}
    
    which tells us that $\operatorname{coker}\beta_0\subseteq U_{(0)}$. If we prove that the space of all global sections of $\pi_-^*(\wedge^{n-p-1}\wt\Uc^\vee\otimes\wedge^{q-1}\wt\Uc^\vee(-1))$ which are constant of the fibers of $\pi_-$ is isomorphic to a subspace of $\operatorname{coker}\beta_0$, we find $\operatorname{coker}\beta_0\simeq U_{(0)}$, and therefore $H^1((\vgit(\pi_+^*\wedge^p\Uc^\vee))^\vee\otimes\wedge^{q-1}\wt\Uc^\vee(-2)) = 0$.

    The second morphism in the sequence \ref{eq:sequencewithH1} can be also rewritten as follows:
    
    \begin{equation}\label{eq:surjection_hom_bundles}
        \beta:\Hc om(\vgit(\pi_+^*\wedge^p\Uc^\vee), \pi_-^*\wedge^{q-1}\wt\Uc^\vee(-2))\arw 
        \Hc om(\pi_-^*\wedge^{p-1}\wt\Uc^\vee, \pi_-^*\wedge^{q-1}\wt\Uc^\vee)
    \end{equation}

    Let us give a very explicit description of $\Hc om(\vgit(\pi_+^*\wedge^p\Uc^\vee),\pi_-^*\wedge^{q-1}\wt\Uc^\vee(-2))$:
    
    \begin{equation}
        \begin{split}
            \Hc om(\vgit(\pi_+^*\wedge^p\Uc^\vee),\pi_-^*\wedge^{q-1}\wt\Uc^\vee(-2)) = \wt W\times\Hom(\wedge^p V_{n-1}^\vee, \wedge^{q-1}V_{n-2}\otimes(\wedge^{n-2}V_{n-2}^\vee)^{\otimes 2})/\operatorname{Stab}
        \end{split}
    \end{equation}

    Therefore, a section of such bundle will be defined by a $\operatorname{Stab}$-equivariant function
    
    \begin{equation}
        F : \wt W\arw \Hom(\wedge^p V_{n-1}^\vee, \wedge^{q-1}V_{n-2}\otimes(\wedge^{n-2}V_{n-2}^\vee)^{\otimes 2})
    \end{equation}
    
    where the equivariancy condition on $F$ is:
    
    \begin{equation}\label{eq:explicit_hom_section}
        F(v\det h^2+C\kappa, Ch^{-1}) = \wedge^p g F(v, C) \wedge^{q-1} h^{-1}.
    \end{equation}
    
    Note that $\wedge^p g$ decomposes as $\wedge^p g^{-1}$ does in Equation \ref{eq:wedgestab}. This allows to decompose $F$ of Equation \ref{eq:explicit_hom_section} in two functions $F_1$ and $F_2$, and the $\operatorname{Stab}$-action reads:
    
    \begin{equation}
        \begin{split}
            \left(
            \begin{array}{c}
                 F_1(v\det h^2+C\kappa, Ch^{-1})  \\
                 F_2(v\det h^2+C\kappa, Ch^{-1}) 
            \end{array}
            \right) = \hspace{250pt}\\
            = \left(
            \begin{array}{c}
                 \wedge^{p-1} h F_1(v, C)\wedge^{q-1}h^{-1}\\
                 P(\kappa, h^{-1})F_1(v, C)\wedge^{q-1}h^{-1}\det h^2 + \wedge^p h F_2(v,C) \wedge^{q-1}h\det h^2
            \end{array}
            \right).
        \end{split}
    \end{equation}

     From the expression above, we see $\beta_0$ as a map which sends a function $F$ as above (and hence the associated section) to its first component $F_1$. In the following, we will use the superscript $(0)$ to denote a function which is independent on $v$ (and therefore constant on fibers of $\pi_-$). Let us now consider a section $F$ such that $F_1$ is independent on $v$, i.e. $F_1 = F_1^{(0)}$. Then we have:
    
    \begin{equation}
        F =
        \left(
        \begin{array}{c}
            F_1^{(0)} \\
            F_2 
        \end{array}
        \right) =
        \left(
        \begin{array}{c}
            F_1^{(0)} \\
            F_2^{(0)} 
        \end{array}
        \right) +
        \left(
        \begin{array}{c}
            0 \\
            F_2^{(>0)} 
        \end{array}
        \right).
    \end{equation}
    
    Note that this decomposition is preserved by the action of $\operatorname{Stab}$, and this implies that both the summand of the right hand side are sections of $\Hc om(\vgit(\pi_+^*\wedge^p\Uc^\vee),\pi_-^*\wedge^{q-1}\wt\Uc^\vee(-2))$. Since the second summand is clearly not in $\beta_0^{-1}(F_1)$, the proof reduces to show that $\Hc om(\vgit(\pi_+^*\wedge^p\Uc^\vee),\pi_-^*\wedge^{q-1}\wt\Uc^\vee(-2))$ cannot have sections which are independent on $v$ and such that $F_1\neq 0$, or in other words:
    
    \begin{equation}
         \{F\in H^0(X_-, \Hc om(\pi_-^*\wedge^{p-1}\wt\Uc^\vee, \pi_-^*\wedge^{q-1}\wt\Uc^\vee) \hspace{2pt}:\hspace{2pt} F\text{\emph{ does not depend on }} v\}\nsubseteq\operatorname{im}\beta_0.
    \end{equation}
    
    In fact, from this and the long exact sequence of cohomology associated to the sequence \ref{eq:sequencewithH1} we would get that the left hand side of the equation above is isomorphic to $\operatorname{coker}\beta_0$, hence proving our claim.\\ 
    \\
    To this purpose, let us act on such $F$ with $g_\lambda: = \operatorname{diag}(\lambda^{-n+2}, \lambda, \dots, \lambda)$: the action on $F2 = F_2^{(0)}$ is
    
    \begin{equation}
       F_2^{(0)}(C\lambda^{-1}) = \lambda^{2n-3+p-q}F_2^{(0)}(C)
    \end{equation}
    
    Since $2n-3+p-q>0$ for $0\leq p\leq q\leq n-1$, $F_2^{(0)}$ is a matrix of polynomial, hence it must be the zero matrix. Let us now act on $F = (F_1^{(0)}, 0)$ by the full $\operatorname{Stab}$:
    
    \begin{equation}
        \left(
        \begin{array}{c}
             F_1^{(0)}(Ch^{-1})  \\
             0 
        \end{array}
        \right)
        = \left(
        \begin{array}{c}
             \wedge^{p-1} h F_1^{(0)}(C)\wedge^{q-1}h^{-1}\\
             P(\kappa, h^{-1})F_1^{(0)}(C)\wedge^{q-1}h^{-1}\det h^2
        \end{array}
        \right).
    \end{equation}
    
    Since $\wedge^{q-1}h^{-1}$ is invertible, the condition $P(\kappa, h^{-1})F_1^{(0)}(C)\wedge^{q-1}h^{-1}\det h^2 = 0$ is fulfilled if and only if $P(\kappa, h^{-1})F_1^{(0)}(C) = 0$, i.e. the columns of $F_1^{(0)}(C)$ must lie in the kernel of $P(\kappa, h^{-1})$ for every $\kappa\in V_{n-2}$. But varying $\kappa$, the kernels of the associated $P(\kappa, h^{-1})$ span the whole domain and therefore $F_1^{(0)} = 0$, and this concludes the proof.
\end{proof}

\begin{lemma}\label{lem:difficultvanishing1}
    The following vanishing holds for $0\leq p, q\leq n-3$:
    
    $$
    \Ext^{>0}(\vgit(\pi_+^*\wedge^p\Uc^\vee), \vgit(\wedge^q\Uc^\vee))) = 0.
    $$
    
\end{lemma}

\begin{proof}
    One has the following diagram, which is a consequence of the sequence of Equation \ref{eq:vgitsequence} and its dual:
    
    \begin{equation}
    \small
        \begin{tikzcd}[row sep = huge, column sep = small]
            \pi_-^*(\wedge^{n-p-2}\wt\Uc^\vee\otimes\wedge^{q-1}\wt\Uc^\vee(-3))\ar[hook]{d} & & \pi_-^*(\wedge^{n-p-2}\wt\Uc^\vee\otimes\pi_-^*\wedge^q\wt\Uc^\vee(-1))\ar[hook]{d}\\
            (\vgit(\pi_+^*\wedge^p\Uc^\vee))^\vee\otimes\pi_-^*\wedge^{q-1}\wt\Uc^\vee(-2)\ar[two heads]{d}\ar[hook]{r} & (\vgit(\pi_+^*\wedge^p\Uc^\vee))^\vee\otimes\vgit(\pi_+^*\wedge^q\Uc^\vee)) \ar[two heads]{r} & (\vgit(\pi_+^*\wedge^p\Uc^\vee))^\vee\otimes\pi_-^*\wedge^q\wt\Uc^\vee)\ar[two heads]{d}\\
            \pi_-^*(\wedge^{n-p-1}\wt\Uc^\vee\otimes\wedge^{q-1}\wt\Uc^\vee(-1)) & & \pi_-^*(\wedge^{n-p-1}\wt\Uc^\vee\otimes\pi_-^*\wedge^q\wt\Uc^\vee(1))
        \end{tikzcd}
    \end{equation}

    Since $\pi_-^*(\wedge^{n-p-1}\wt\Uc^\vee\otimes\wedge^q\wt\Uc^\vee(1))$ is a tensor product of globally generated vector bundles it decomposes in a direct sum of globally generated, irreducible homogeneous vector bundles, and therefore it has no higher cohomology.\\
    \\
    Both $\wedge^{n-p-1}\wt\Uc^\vee\otimes\wedge^{q-1}\wt\Uc^\vee(-1)$ and $\wedge^{n-p-2}\wt\Uc^\vee\otimes\wedge^q\wt\Uc^\vee(-1)$ decompose as direct sums of bundles associated to weights of the form $(\lambda_1, \dots, \lambda_{n-2}|1,1)$ with $\lambda_i\leq 2$ for every $i$. The product of such a summand with $\Sym^m\Qc^\vee(2m)$ can be described by the following diagrams:
    
    \begin{equation}
            \begin{split}
                \tiny
                \frac{\ydiagram{2,2,1,1,0}}{\hspace{-10pt}\ydiagram{1,1}}\otimes
                \frac{\overbrace{\ydiagram{6,6,6,6,6}}^{2m}}{\hspace{-28pt}\ydiagram{3}}
            \end{split}
    \end{equation}
    
    Clearly, if $\lambda_{n-2} = 0$ there is no cohomology for $m=0$ and only sections for $m>0$, while for $\lambda_{n-2}>0$ there are again only sections. For further relevance, we note that the only contribution with $m=0$ to the sections of $\wedge^{n-p-1}\wt\Uc^\vee\otimes\wedge^{q-1}\wt\Uc^\vee(-1)$ is a vector space isomorphic to $\wedge^{q-p}V_n$: in other words, there is such a vector space of global sections of $\wedge^{n-p-1}\wt\Uc^\vee\otimes\wedge^{q-1}\wt\Uc^\vee(-1)$ which are independent on $v$.\\
    \\
    The bundle $\pi_-^*(\wedge^{n-p-2}\wt\Uc^\vee\otimes\wedge^{q-1}\wt\Uc^\vee(-3))$ requires a more careful analysis. Let us first consider the contribution for $m = 0$: $\wedge^{n-p-2}\wt\Uc^\vee\otimes\wedge^{q-1}\wt\Uc^\vee(-3)$ decomposes in direct summands which can have the following shapes:
    
    \begin{equation}
            \begin{split}
                \tiny
                \frac{\begin{array}{c}
                    \hspace{-18pt}\vspace{5pt}\\
                    \ydiagram{0,0}
                \end{array}}{\ydiagram{3,3}}\hspace{3pt};\hspace{15pt}
                \frac{\hspace{-18pt}\begin{array}{c}
                    \vdots\vspace{5pt}\\
                    \ydiagram{1,0}
                \end{array}}{\ydiagram{3,3}}\hspace{3pt};\hspace{15pt}
                \frac{\hspace{-10pt}\begin{array}{c}
                    \vdots\vspace{5pt}\\
                    \ydiagram{2,0}
                \end{array}}{\ydiagram{3,3}}\hspace{3pt};\hspace{15pt}\frac{\hspace{-18pt}\begin{array}{c}
                    \vdots\vspace{5pt}\\
                    \ydiagram{1,1}
                \end{array}}{\ydiagram{3,3}}\hspace{3pt};\hspace{15pt}
                \frac{\hspace{-10pt}\begin{array}{c}
                    \vdots\vspace{5pt}\\
                    \ydiagram{2,1}
                \end{array}}{\ydiagram{3,3}}\hspace{3pt};\hspace{15pt}
                \frac{\hspace{-10pt}\begin{array}{c}
                    \vdots\vspace{5pt}\\
                    \ydiagram{2,2}
                \end{array}}{\ydiagram{3,3}}\hspace{3pt}.
            \end{split}
    \end{equation}
    
    All of them give repetitions, except for the third one. However, that configuration can happen only if $n-p-2 = q-1 = n-3$, which implies $q = n-2$, and this is against our assumptions.
    
    Consider now the summands with $m=1$. Here the only possibilities are
    
    \begin{equation}
            \begin{split}
                \tiny
                \frac{\hspace{-18pt}\begin{array}{c}
                    \vspace{5pt}\\
                    \ydiagram{2,2}
                \end{array}}{\ydiagram{4,3}}\hspace{3pt};\hspace{15pt}
                \frac{\hspace{-10pt}\begin{array}{c}
                    \vdots\vspace{5pt}\\
                    \ydiagram{3,2}
                \end{array}}{\ydiagram{4,3}}\hspace{3pt};\hspace{15pt}
                \frac{\hspace{-0pt}\begin{array}{c}
                    \vdots\vspace{5pt}\\
                    \ydiagram{4,2}
                \end{array}}{\ydiagram{4,3}}\hspace{3pt};\hspace{15pt}\frac{\hspace{-10pt}\begin{array}{c}
                    \vdots\vspace{5pt}\\
                    \ydiagram{3,3}
                \end{array}}{\ydiagram{4,3}}\hspace{3pt};\hspace{15pt}
                \frac{\hspace{-0pt}\begin{array}{c}
                    \vdots\vspace{5pt}\\
                    \ydiagram{4,3}
                \end{array}}{\ydiagram{4,3}}\hspace{3pt};\hspace{15pt}
                \frac{\hspace{-0pt}\begin{array}{c}
                    \vdots\vspace{5pt}\\
                    \ydiagram{4,4}
                \end{array}}{\ydiagram{4,3}}\hspace{3pt}.
            \end{split}
    \end{equation}
    
    Clearly, higher cohomology can occur only in the second and third configurations, but the fourth one never happens for $0\leq p,q\leq n-3$ as we argued above. However, the second one does appear whenever $p\leq q$, yielding a contribution of $\wedge^{q-p}V_n$ to $H^1(X_-, \pi_-^*(\wedge^{n-p-2}\wt\Uc^\vee\otimes\wedge^{q-1}\wt\Uc^\vee(-3)))$. A quick check for $m>1$ ensures that there are no additional contributions to higher cohomology for this bundle. This, together with the fact that the only $m=0$ contribution to $H^0(X_-,\wedge^{n-p-1}\wt\Uc^\vee\otimes\wedge^{q-1}\wt\Uc^\vee(-1))$ is isomorphic to $\wedge^{q-p}V_n$, allows to conclude by Lemma \ref{lem:lifesavinglemma}.
\end{proof}

\begin{lemma}\label{lem:difficultvanishing2}
    One has $\Ext^{>0}( \vgit(\Oc(k)), \vgit(\pi_+^*\wedge^p\Uc^\vee)) = \Ext^{>0}(\vgit(\pi_+^*\wedge^p\Uc^\vee), \vgit(\Oc(k))) = 0$ for $0\leq p\leq n-3$ and $0\leq m\leq 2$.
\end{lemma}

\begin{proof}
    Recall that $\vgit(\Oc(k)) = \Oc(-k)$: proving our claim amounts to show that $\vgit(\pi_+^*\wedge^p\Uc^\vee)\otimes\Oc(k)$ and $(\vgit(\pi_+^*\wedge^p\Uc^\vee))^\vee\otimes\Oc(-k)$ have no higher cohomology. The first bundle is realized as an extension in:
    
    \begin{equation}
        0\arw\wedge^{p-1}\wt\Uc^\vee(k-2)\arw\vgit(\pi_+^*\wedge^p\Uc^\vee)\otimes\Oc(k)\arw\wedge^p\wt\Uc^\vee(k)\arw 0.
    \end{equation}
    
    Here the third bundle is the pullback of a homogeneous, irreducible and globally generated bundle, and hence it has no cohomology in higher degree. Consider now $\wedge^{p-1}\wt\Uc^\vee(k-2)\otimes\Sym^m\wt\Qc^\vee(2m)$ for $m\geq 0$. Since $p\leq n-3$, the last entries of the associated weight will be
    
    $$(\dots, 2m, 2m | m-k+2, -k+2)$$
    
    which always yield a repetition or sections.\\
    \\
    On the other hand, the cohomology of $(\vgit(\pi_+^*\wedge^p\Uc^\vee))^\vee\otimes\Oc(-k)$ is computed by
    
    \begin{equation}
        0\arw \wedge^{n-p-2}\wt\Uc^\vee(-1-k)\arw(\vgit(\pi_+^*\wedge^p\Uc^\vee))^\vee\arw \wedge^{n-p-1}\Uc^\vee(1-k)\arw 0.
    \end{equation}
    
    The third is the pullback of a globally generated bundle for $k\leq 1$, while for $k = 2$ the weight of any direct summand of its pushforward is $(\dots\lambda_{n-4}+2m, \lambda_{n-3}+2m | m+1, 1)$, which again cannot have higher cohomology. Let us now turn our attention to $\wedge^{n-p-2}\wt\Uc^\vee(-1-k)$. Here all direct summands of the pushforward will have weight $(\dots\lambda_{n-4}+2m, \lambda_{n-3}+2m | m+1+k, 1+k)$. A careful analysis permits to conclude that the only higher cohomology occurring is $H^1\simeq \CC$ for $k = 2, p = 1, m=1$. We can treat this situation exactly as we did in Lemma 
    \ref{lem:difficultvanishing1}, once we observe that the contribution for $m=0$ to $H^0(X_-, \wedge^{n-2}\Uc^\vee(-1))$ is one dimensional. The rest of the proof follows exactly as in the previous lemma, i.e. by applying Lemma \ref{lem:lifesavinglemma} for $q=0, p=1$.
\end{proof}

Summing all up, we find:

\begin{proposition}\label{prop:functors_no_potential}
    There is a fully faithful functor $\dbcoh(X_{ {+}})\xrightarrow{\,\,\,\,\,\,}\dbcoh(X_{ {-}})$.
\end{proposition}

\begin{proof}

By Proposition \ref{prop:equivalence+_no_potential} we already know that $i_+|_{\Wc_0}^*$ is an equivalence, while by Proposition \ref{prop:partiallytilting-} $i_-|_{\Wc_0}^*$ is fully faithful
hence we get the desired result in the form of a functor 

\begin{equation}
    i_-|_{\Wc_0}^* \circ (i_+|_{\Wc_0}^*)^{-1}:\dbcoh(X_+)\arw\dbcoh(X_-).
\end{equation}

\end{proof}

\subsection{Turn on the superpotential: B-brane categories}\label{subsection:branes}
The goal of this section is to lift the derived embedding  {$\dbcoh X_+\subset\dbcoh X_-$} to a fully faithful functor of B-brane categories. We will slightly specialize the definitions of \cite[Section 2]{segal} to the present setting and terminology. In the following, let $X$ be a smooth scheme (or stack), together with a function (superpotential) $f:X\arw\CC$. For the moment we do not need to have a $G$-action on $X$ which leaves $f$ invariant, but just a $\CC^*$-action on $X$ with weight two on $f$ and such that $-1\in\CC^*$ acts trivially on $X$. As above, we denote this action ``R-symmetry'' and we use the traditional notation $\CC^*_R$. We will denote this data by the expression $(X, f)$, without explicitly referring to the R-symmetry.

\begin{definition}\label{def:brane}
     We call \emph{B-brane} on $(X, f)$ a pair $(E, d_E)$ where $E$ is a vector bundle on $X$ and $d_E$ is an endomorphism of $E$ with $\CC^*_R$-weight one, such that $d^2 = \II_E f$, where $\II_E\in\Hom_X(E, E)$ is the identity morphism.
\end{definition}

\begin{definition}\label{def:branedgcat}
    We call $\Bc r(X, f)$ the category whose objects are B-branes on $(X, f)$ and Hom-sets are defined as
    \begin{equation}
        \Hom_{\Bc r(X, f)}((E,d_E), (F, d_F)) := (\Hc om_X(E, F), d_{E,F})
    \end{equation}
    where $d_{E,F}:=\II_E^\vee\otimes d_F - d_E^\vee\otimes\II_F$.
\end{definition}

Note that $\Bc r(X, f)$ is not a dg-category, but rather it is enriched over the category of B-branes on the model $(X, G, \CC^*_R, G, 0)$, i.e. with a trivial superpotential (this category has been defined in \cite[Definition 2.7]{segal}). Now we are ready to recall the definition of B-brane category:

\begin{definition}\label{def:branecat}
    We call $\operatorname{Br}(X, f)$ the category whose objects are B-branes on $(X, f)$ and Hom-sets are defined as
    \begin{equation}
        \Hom_{\operatorname{Br}(X, f)}((E,d_E), (F, d_f)) = R\Gamma((\Hc om_X(E, F), d_{E,F})),
    \end{equation}
    where the functor $R\Gamma$ on a B-brane is defined in \cite[Definition 2.7]{segal}.
\end{definition}

Computing morphisms in $\operatorname{Br}(X, f)$ is simplified by the existence of a spectral sequence which degenerates at its first page:

\begin{equation}\label{eq:spectralsequence}
    \left( H^p(X, \Hc om_X(E, F)), d_{E,F}\right) \xRightarrow{} R\Gamma((\Hc om_X(E, F), d_{E,F})).
\end{equation}

Here the vertical grading on the right hand side is given by the $R$-symmetry.

Let us denote by $\Wc$ the subcategory of $\operatorname{Br}(\Xc, f)$ generated as direct sums by the objects

 {
$$\left\{E_{(\lambda|0)} : \lambda\in I_W\right\}$$
}

together with appropriate endomorphisms $d_{E_\lambda}$.  {Recall that the same vector bundles generate $\Wc_0$.}

\begin{proposition}\label{prop:ff_potential}
    The functors $\iota_\pm^*|_\Wc: \Wc\arw \operatorname{Br}(X_\pm, f)$ are fully faithful.
\end{proposition}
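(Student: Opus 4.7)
The plan is to extend the argument of Proposition \ref{prop:functors_no_potential} to the curved setting by exploiting the spectral sequence \ref{eq:spectralsequence}. Fix two $B$-branes $(E, d_E), (F, d_F) \in \Wc$. By Definition \ref{def:branecat}, the Hom in $\operatorname{Br}(\Xc, f)$ is computed as $R\Gamma$ of the curved complex $(\Hc om_{\Xc}(E, F), d_{E,F})$, and the analogous statement holds over $X_\pm$ for $\iota_\pm^*(E, d_E)$ and $\iota_\pm^*(F, d_F)$. The task therefore reduces to comparing the two hypercohomologies via the spectral sequence $(H^p(-, \Hc om^q(E, F)), d_{E,F}) \Rightarrow R\Gamma(\Hc om(E,F), d_{E,F})$.

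First, I would check that the spectral sequence degenerates at $E_1$ on both sides. On $\Xc$, since it is a quotient stack by a linearly reductive group, one has $H^{>0}(\Xc, \Gc) = 0$ for any coherent sheaf $\Gc$; in particular for each $R$-graded component of $\Hc om_\Xc(E, F)$, so only $p=0$ survives. On $X_\pm$, the underlying bundles of $E$ and $F$ pull back to direct sums of the summands of $i_\pm|_{\Wc_0}^* \Tc$, and Lemmas \ref{lem:partiallytilting-} and \ref{lem:partiallytilting+} imply that the higher $\Ext$ groups between any two such summands vanish; hence the $E_1$ page is again concentrated in $p=0$.

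Once degeneration is in place, the B-brane Hom in either category is isomorphic to the cohomology of the complex of global sections $(H^0(-, \Hc om^\bullet(E, F)), d_{E,F})$ with respect to the $R$-symmetry grading. The restriction morphism $\iota_\pm^*$ yields a morphism of such complexes which is compatible with the differentials, because $d_{E,F}$ is defined by composition with $d_E$ and $d_F$ and the latter pull back to the endomorphisms defining $\iota_\pm^*(E,d_E)$ and $\iota_\pm^*(F,d_F)$. By the same Hartogs-type argument used in Proposition \ref{prop:functors_no_potential}, which applies because $i_\pm$ is an isomorphism in codimension two and the sheaves $\Hc om$ between vector bundles are reflexive, one has $H^0(\Xc, \Hc om_\Xc^q(E, F)) \cong H^0(X_\pm, \Hc om_{X_\pm}^q(\iota_\pm^* E, \iota_\pm^* F))$ for every $q$. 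This induces an isomorphism on $E_\infty$, hence on the Homs in the B-brane categories.

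The heart of the argument has already been carried out in the earlier vanishing Lemmas \ref{lem:partiallytilting-} and \ref{lem:partiallytilting+}; the only genuinely new input here is the compatibility of the differentials $d_{E,F}$ with restriction, which is formal. The main conceptual point to keep in mind is that the $R$-symmetry grading must be tracked carefully, so that the spectral sequence is regarded as a spectral sequence of $\CC^*_R$-graded complexes and the degeneration yields an isomorphism in each $R$-degree separately.
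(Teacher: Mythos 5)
Your proposal is correct and follows essentially the same route as the paper: reduce to the $E_1$ page of the spectral sequence \ref{eq:spectralsequence}, observe it is concentrated in the row $p=0$ on $\Xc$ (reductive quotient stack) and on $X_\pm$ (via Lemmas \ref{lem:partiallytilting-} and \ref{lem:partiallytilting+}, i.e. Proposition \ref{prop:functors_no_potential}), and identify the surviving $H^0$ rows by the Hartogs argument since $i_\pm$ is an isomorphism in codimension two. Your additional remarks on compatibility of $d_{E,F}$ with restriction and on tracking the $\CC^*_R$-grading are sound refinements of the same argument, not a different approach.
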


\begin{proof}
    In light of the spectral sequence \ref{eq:spectralsequence}, we just need to compare the following bicomplexes:
    
    \begin{equation}
        \left( H^p(X_\pm, \Hc om_{X_\pm}(\iota_\pm^*E, \iota_\pm^*F)), d_{\iota_\pm^*E,\iota_\pm^*F}\right),
    \end{equation}
    
    \begin{equation}
        \left( H^p(\Xc, \Hc om_\Xc(E, F)), d_{E,F}\right).
    \end{equation}
    
    Both of them are nonzero only on the row $p=0$. The claim for the first one simply follows from the fact that there are no higher Exts between vector bundles on $\Xc$, while for the first one the statement is just a consequence of Proposition \ref{prop:functors_no_potential}. By Hartogs's lemma we conclude that:
    
    \begin{equation}
        H^p(X_\pm, \Hc om_{X_\pm}(\iota_\pm^*E, \iota_\pm^*F)) \simeq H^p(\Xc, \Hc om_\Xc(E, F)).
    \end{equation}
    
    therefore, we get an isomorphism between the first pages of two spectral sequences, which in turn gives an isomorphism $R\Gamma((\Hc om_{X_\pm}(\iota_\pm^*E, \iota_\pm^*F), d_{\iota_\pm^*E,\iota_\pm^*F})) \simeq R\Gamma((\Hc om_\Xc(E, F), d_{E,F})).$
\end{proof}

\begin{proposition}\label{prop:ess_surj_potential}
     {$\iota_+^*|_\Wc$} is essentially surjective.
\end{proposition}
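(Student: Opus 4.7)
The plan is to adapt the essential surjectivity argument in the proof of Proposition \ref{prop:functors_no_potential} to the B-brane setting. Given an arbitrary B-brane $(F, d_F) \in \operatorname{Br}(X_-, f)$, the goal is to produce an object $(\tilde E, \tilde d) \in \Wc$ with $\iota_-^*(\tilde E, \tilde d) \cong (F, d_F)$ in $\operatorname{Br}(X_-, f)$.

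First I would reduce to the case where the underlying $\ZZ/2$-graded vector bundle of the given B-brane is already a pullback from $\Xc$. By Proposition \ref{prop:functors_no_potential}, each graded piece of $F$ admits a finite resolution by direct sums of bundles of the form $\iota_-^* E_\lambda$ with $\lambda \in \operatorname{Box}(n,1)$, since $\iota_-^*|_{\Wc_0}$ is an equivalence. Because $\operatorname{Br}(\Xc, f)$ has vanishing higher Exts between the generators $E_\lambda$ (as invoked in the proof of Proposition \ref{prop:ff_potential} via \cite[Lemma 2.2.8]{ballardfaverokatzarkov}), this resolution lifts to an equivalence in $\operatorname{Br}(X_-, f)$ between $(F, d_F)$ and a B-brane whose underlying vector bundle is $\iota_-^* \tilde E$ for some direct sum $\tilde E$ of $E_\lambda$'s on $\Xc$.

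Next, the endomorphism $d_F$ extends uniquely from $X_-$ to $\Xc$ by Hartogs' lemma, precisely as in the second half of the proof of Proposition \ref{prop:ff_potential}: since $\iota_-$ is an isomorphism in codimension two and $\Xc$ is smooth, any global section of the vector bundle $\Hc om_\Xc(\tilde E, \tilde E)$ is determined by its restriction to $X_-$, giving a unique lift $\tilde d$ of R-weight one. The compatibility condition $\tilde d^2 = f\cdot \II_{\tilde E}$ is itself a global section of the same vector bundle vanishing on $X_-$, so it vanishes on all of $\Xc$. This produces a genuine B-brane $(\tilde E, \tilde d) \in \Wc$ whose pullback is isomorphic to $(F, d_F)$, yielding essential surjectivity.

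The main obstacle I anticipate is the first step: propagating the B-brane differential $d_F$ along the resolution of the underlying vector bundle, i.e.\ verifying that the resolution of $F$ by direct sums of $\iota_-^* E_\lambda$'s lifts to a genuine equivalence of B-branes rather than only of underlying sheaves. This is the matrix-factorization analog of replacing a coherent sheaf by a vector-bundle resolution, and while standard in the theory of curved dg-modules, some care is required to set it up rigorously in terms of the $R$-symmetric endomorphisms appearing in Definition \ref{def:brane}.
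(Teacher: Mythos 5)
Your proposal is correct and follows essentially the same route as the paper: resolve the underlying bundle by the window generators, transport the curved differential onto that resolution, and then extend it to $\Xc$ by Hartogs (with $d^2 = f\cdot\II$ propagating automatically since $\iota_-$ is an isomorphism in codimension two). The ``main obstacle'' you flag --- lifting the resolution to a homotopy equivalence of B-branes rather than of underlying sheaves --- is exactly the Claim in the paper's proof, which it settles by invoking the second part of the proof of \cite[Lemma 3.6]{segal}.
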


\begin{proof}
    The proof is an adaptation of the one of \cite[Lemma 3.6]{segal} to the present setting, hence we will keep it concise. Let us consider a brane  {$(E, d_E)\in\operatorname{Br}(X_+, f)$}. Then, since $E$ is a vector bundle on  {$X_+$}, it admits a resolution of the form
    
    \begin{equation}\label{eq:resolution}
        \begin{tikzcd}
            0\ar{r} & E_{-s}\ar{r}{\partial_\Ec} & E_{-s+1}\ar{r}{\partial_\Ec} & \cdots\ar{r}{\partial_\Ec} & E_{-1}\ar{r}{\partial_\Ec} & E_0\ar{r}{q} & E\ar{r} & 0
        \end{tikzcd}
    \end{equation}

    where every $E_i$ is a direct sum of bundles of the form  {$\Sc_\lambda \Uc^\vee$} with  {$\lambda\in I_W$}. Out of this resolution, we can construct a complex
    
    \begin{equation}
        \Ec:=\bigoplus_{i}E_{-i}[i].
    \end{equation}

    Observe that $\partial_\Ec$, as an endomorphism of $\Ec$, has $\CC^*_R$-weight one.
    
    \textbf{Claim.} \emph{There exists a map $d_\Ec$ with the property $d_\Ec^2 = If$ and such that the corresponding brane $(\Ec, d_\Ec)$ is homotopy-equivalent to $(E, d_E)$.}
    
    Suppose this claim is true. Call $\wh\Ec$ the complex of vector bundles on $\Xc$ such that $\Ec$ is its restriction to $X_-$. 
    As noted in the proof of Proposition \ref{prop:functors_no_potential}, it follows by Hartogs's lemma that  {$H^{0}(X_+, \Hc om_{X_+}(\Ec, \Ec)) = H^{0}(\Xc, \Hc om_\Xc(\wh\Ec, \wh\Ec))$}. In particular, $d_\Ec$ is the restriction of an endomorphism $d_{\wh\Ec}$ of $\wh\Ec$: this proves that every brane $(E, d_E)$ in  {$\operatorname{Br}(X_+, f)$} is homotopically equivalent (and hence identified in  {$\operatorname{Br}(X_+, f)$}) to the restriction to  {$X_+$} of a brane $(\wh\Ec, d_{\wh\Ec})$ in $\operatorname{Br}(\Xc, f)$, and this concludes the proof.
    
    The proof of the Claim is rather technical, but it is identical to the second part of \cite[proof of Lemma 3.6]{segal}, hence it will be omitted.
\end{proof}

\begin{theorem}\label{thm:derived_embedding_main}
    Let $S\in H^0(F(1, 2, n), \Oc(1,1))$ be a general section, fix $M:= Z(\sigma)$ and $Y_i:= Z(h_{i*}(S)$ for $i\in\{1;2\}$. Then there is a fully faithful functor $\dbcoh(Y_1)\xhookrightarrow{\,\,\,\,\,\,} \dbcoh(Y_2)$.
\end{theorem}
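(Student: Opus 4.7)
The plan is to package the results of the previous subsections into a single chain of equivalences and embeddings. By Lemma~\ref{lem:glsm_crit_loci}, the GLSM $(W, GL(2), \CC^*_R, f)$ has vacuum manifolds $Y_2$ and $Y_1$ in the two phases $\tau>0$ and $\tau<0$ respectively. The first step is to invoke Shipman's version of Kn\"orrer periodicity \cite{shipman}: since $f$ is obtained by pairing the tautological $GL(2)$-equivariant section $\phi$ of $\Uc(2)$ with the fibre coordinate $\omega$, we get equivalences of triangulated categories
\begin{equation}
    \operatorname{Br}(X_+, f) \;\simeq\; \dbcoh(Y_2), \qquad \operatorname{Br}(X_-, f) \;\simeq\; \dbcoh(Y_1),
\end{equation}
identifying each $B$-brane category with the derived category of the critical locus GIT quotient in the corresponding phase. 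One only has to check that the $\CC^*_R$-action and the GIT presentations of $X_\pm$ satisfy the hypotheses of Shipman's theorem, which is exactly how the GLSM has been set up in Section~\ref{subsection:criticalloci}.

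The second step is to read off the desired embedding from the window category $\Wc \subset \operatorname{Br}(\Xc, f)$ constructed in Section~\ref{subsection:branes}. By Propositions~\ref{prop:ff_potential} and \ref{prop:ess_surj_potential}, the restriction functor $\iota_-^*|_\Wc\colon \Wc \to \operatorname{Br}(X_-, f)$ is fully faithful and essentially surjective, hence an equivalence; while $\iota_+^*|_\Wc\colon \Wc \to \operatorname{Br}(X_+, f)$ is fully faithful by Proposition~\ref{prop:ff_potential}. Composing the inverse of the former with the latter yields a fully faithful functor $\operatorname{Br}(X_-, f) \hookrightarrow \operatorname{Br}(X_+, f)$.

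Chaining everything together we obtain
\begin{equation}
    \dbcoh(Y_1) \;\simeq\; \operatorname{Br}(X_-, f) \;\xleftarrow{\,\sim\,}\; \Wc \;\xhookrightarrow{\,\iota_+^*|_\Wc\,}\; \operatorname{Br}(X_+, f) \;\simeq\; \dbcoh(Y_2),
\end{equation}
which is a fully faithful functor $\dbcoh(Y_1) \hookrightarrow \dbcoh(Y_2)$. The main obstacle in this strategy does not lie in the final assembly, which is essentially formal, but rather in the earlier preparation: verifying that the window $\Wc$ has been chosen so that both restriction functors have the right properties (the tilting Lemmas~\ref{lem:partiallytilting-} and \ref{lem:partiallytilting+}), and that the GLSM is set up so that Kn\"orrer periodicity identifies the two brane categories with $\dbcoh(Y_1)$ and $\dbcoh(Y_2)$ respectively. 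Once these inputs are in place, the theorem follows by a one-line composition.
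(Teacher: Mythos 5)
Your proposal is correct and follows essentially the same route as the paper's own proof: Lemma \ref{lem:glsm_crit_loci} to identify the critical-locus quotients with $Y_1$ and $Y_2$, Shipman's Kn\"orrer periodicity to pass to the brane categories $\operatorname{Br}(X_\pm, f)$, and Propositions \ref{prop:ff_potential} and \ref{prop:ess_surj_potential} to obtain the fully faithful functor $\operatorname{Br}(X_-, f)\hookrightarrow \operatorname{Br}(X_+, f)$ through the window $\Wc$. The only difference is that you spell out the composition through $\Wc$ a bit more explicitly, which the paper leaves implicit.
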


\begin{proof}
    By Propositions \ref{prop:ff_potential}, \ref{prop:ess_surj_potential} there is a fully faithful functor  {$\operatorname{Br}(X_+, f)\xhookrightarrow{\,\,\,\,\,\,}\operatorname{Br}(X_-, f)$}. By Kn\"orrer periodicity \cite[Theorem 3.4]{shipman}, there are equivalences $
    \dbcoh(\operatorname{Crit}(f)\git_{\pm}G)\simeq \operatorname{Br}(X_\pm, f)$. The proof follows by the isomorphisms  {$\operatorname{Crit}(f)\git_+G\simeq Y_1$} and  {$\operatorname{Crit}(f)\git_-G\simeq Y_2$} developed in Lemma \ref{lem:glsm_crit_loci}.
\end{proof}

\begin{remark}\label{rem:chessgame}
    Note that in \cite{leungxie} an embedding  {$\dbcoh(X_+)\subset\dbcoh(X_-)$} is constructed with a completely different method: in fact Leung and Xie provide semiorthogonal decompositions of the total space of $\Oc(-1,-1)$ on $F(1,2,V)$ and by an inductive method based on mutations of exceptional object they prove that  {$\dbcoh(X_+)$} is equivalent to a subcategory of  {$\dbcoh(X_-)$}, hence proving the DK conjecture for this example. However, their result cannot be directly applied to our method since it is not compatible with the construction of the window category. However, one could proceed in the spirit of \cite{leungxie} by constructing semiorthogonal decompositions of the zero locus $M$ of a general section of $\Oc(1,1)$ by means of \cite{cayleytrick}. These decompositions are formally identical to the ones given by \cite{leungxie}, and a similar approach based on mutations is a possible way to recover the embedding $\dbcoh(Y_1)\subset\dbcoh(Y_2)$. However, the current proof, despite providing the embedding in a very abstract fashion, is considerably shorter and more suitable to generalizations.
\end{remark}

The authors have no competing interests to declare that are relevant to the content of this article. Data sharing not applicable to this article as no datasets were generated or analysed during the current study. Corresponding author: Marco Rampazzo.

\end{document}